\newtheorem{theorem}{Theorem}[section]
\newtheorem{rem}[theorem]{Remark}
\newtheorem{lem}[theorem]{Lemma}
\newtheorem{propos}[theorem]{Proposition}
\newtheorem*{udefin}{Defininiton}
\newtheorem{defin}[theorem]{Definition}
\begin{document}

\title{Anti-Ramsey numbers of small graphs}

\author{Arie Bialostocki\and
Shoni Gilboa\thanks{Mathematics Dept., The Open University of Israel, Raanana 43107, Israel. \texttt{tipshoni@gmail.com} Tel: 972-77-7081316}
\and
Yehuda Roditty\thanks{Schools of Computer Sciences, The Academic College of Tel-Aviv-Yaffo, and Tel-Aviv University, Tel-Aviv 69978, Israel. \texttt{jr@mta.ac.il} 
      } 
}

\maketitle

\begin{abstract} 
 The anti-Ramsey number $AR(n,G)$, for a graph $G$ and an integer $n\geq|V(G)|$, is defined to be the minimal integer $r$ such that in any edge-colouring of $K_n$ by at least $r$ colours there is a multicoloured copy of $G$, namely, a copy of $G$ whose edges have distinct colours. In this paper we determine the anti-Ramsey numbers of all graphs having at most four edges.
\end{abstract}

\begin{quote}
\textbf{Keywords:} Anti-Ramsey, Multicoloured, Rainbow. 
\end{quote}

\section{Introduction}\label{intro}

\begin{udefin} A subgraph of an edge-coloured graph is called {\bf multicoloured} if all its edges have distinct colours.

Let $G$ be a (simple) graph. For any integer $n\geq|V(G)|$, let $AR(n,G)$ be the minimal integer $r$ such that in any edge-colouring of $K_n$ by at least $r$ colours there is a multicoloured copy of $G$.
\end{udefin}
\begin{rem}\label{exactly} It is easy to see that $AR(n,G)$ is also the minimal integer $r$ such that in any edge-colouring of $K_n$ by {\bf exactly} $r$ colours there is a multicoloured copy of $G$.
\end{rem}
$AR(n,G)$ was determined for various graphs $G$. We mention some of the results, which are relevant to our work.

For $K_{1,k}$, a star of size $k\geq 2$, Jiang showed (\cite{J}) that for any $n\geq k+1$,
\begin{equation}\label{eq:K_1k}AR(n,K_{1,k})=\left\lfloor\frac{k-2}{2}n\right\rfloor+\left\lfloor\frac{k-2}{n-k+2}\right\rfloor+2+\left(n\bmod 2\right)\left(k\bmod 2\right)\left(\left\lfloor\frac{2k-4}{n-k+2}\right\rfloor\bmod 2\right).\end{equation}

For $P_{k+1}$, a path of length $k\geq 2$, Simonovits and S\'os showed (\cite{SS}) that for large enough $n$ ($n\geq\frac{5}{4}k+c$ for some universal constant $c$),
\begin{equation}\label{eq:P_k}AR(n,P_{k+1})=(\lfloor k/2\rfloor-1)\left(n-\frac{\lfloor k/2\rfloor}{2}\right)+2+k\bmod 2.\end{equation}

For $C_k$, a cycle of length $k$, Erd\H os, Simonovits and S\'os noted in \cite{ESS}, where anti-Ramsey numbers were first introduced, that for any $n\geq 3$
\begin{equation}\label{eq:C_3}AR(n,C_3)=n,\end{equation}
showed that for any $n\geq k\geq 3$,
$$AR(n,C_k)\geq\binom{k-1}{2}\left\lfloor\frac{n}{k-1}\right\rfloor+\left\lceil\frac{n}{k-1}\right\rceil+\binom{n\bmod(k-1)}{2}.$$
and conjectured this lower bound to be always tight. This conjecture was confirmed, first for $k=4$ by Alon who proved (\cite{A}) that for any $n\geq 4$, 
\begin{equation}\label{eq:C_4}AR(n,C_4)=\left\lfloor\frac{4}{3}n\right\rfloor,\end{equation}
and thirty years later for any $k$, by Montellano-Ballesteros and Neumann-Lara (\cite{MbNl}). 

For $tP_2$, the disjoint union of $t$ paths, each of length $1$, i.e., a matching of size $t$, Schiermeyer first showed (\cite{S}) that $AR(n,tP_2)=(t-2)\left(n-\frac{t-1}{2}\right)+2$
for any $t\geq 2$, $n\geq 3t+3$.
Then Fujita, Kaneko, Schiermeyer and Suzuki proved (\cite{FKSS}) that for any $t\geq 2$, $n\geq 2t+1$,
\begin{equation}\label{eq:tP_2}AR(n,tP_2)=\begin{cases}(t-2)(2t-3)+2&n\leq\frac{5t-7}{2}\\(t-2)\left(n-\frac{t-1}{2}\right)+2&n\geq\frac{5t-7}{2}\,.\end{cases}\end{equation}
Finally, the remaining case $n=2t$ was settled by Haas and Young (\cite{HY}) who confirmed the conjecture made in \cite{FKSS}, that 
\begin{equation}\label{eq:perfect}AR(2t,tP_2)=\begin{cases}(t-2)\frac{3t+1}{2}+2&3\leq t\leq 6\\(t-2)(2t-3)+3&t\geq 7\,.\end{cases}\end{equation}
\smallskip

The results just mentioned cover many of the graphs with up to four edges. In this paper we complete the computation of $AR(n,G)$ for any $n\geq |V(G)|$ for all  graphs $G$ having at most four edges.  
The resulting anti-Ramsey numbers are summarized in Table \ref{table1} at the next page.

We remark that Proposition \ref{propos:P_3P_2}, Proposition \ref{prop:P_32P_2}, Proposition \ref{propos:C_3P_2}, Proposition \ref{prop:P_4P_2} and  Proposition \ref{propos:2P_3}, respectively, are used in \cite{GR} to deduce that for any integers $t\geq 1$ , $k\geq 2$ and large enough $n$,
\begin{align*}
AR(n,P_2\cup tP_3)&=(t-1)\left(n-\frac{t}{2}\right)+3,\\
AR(n,kP_2\cup tP_3)&=(k+t-2)\left(n-\frac{k+t-1}{2}\right)+2,\\
AR(n,C_3\cup tP_2)&=t\left(n-\frac{t+1}{2}\right)+2,\\
AR(n,P_4\cup tP_2)&=t\left(n-\frac{t+1}{2}\right)+2,\\
AR(n,tP_3)&=(t-1)\left(n-\frac{t}{2}\right)+2.
\end{align*}

\begin{table}[p]
\caption{Anti-Ramsey numbers of all graphs having at most four edges}
\centering
\begin{tabular}{ | c | c | c |}
\hline
$G$ & $AR(n,G)$ & Reference \\ \hline\hline
$P_2$ & 1 & Trivial \\ \hline
$P_3$ & 2 & Obvious \\ \hline
\multirow{2}{*}{$2P_2$} & 4\quad$n=4$ 
&\multirow{2}{*}{Lemma \ref{lem:2P_2}, \cite{S} (for $n\geq 9$), \cite{FKSS} (for $n\geq 5$)}
\\
& 2\quad$n\geq 5$ & \\ \hline
\multirow{2}{*}{$P_4$} & 4\quad$n=4$ 
& \multirow{2}{*}{Proposition \ref{propos:P_4}, \cite{SS} for large enouh $n$ (see \eqref{eq:P_k} above)}
\\
& 3\quad$n\geq 5$ &  \\ \hline
$P_3\cup P_2$ & 3 &Proposition \ref{propos:P_3P_2}  \\ \hline\hline
$K_{1,3}$ & $\lfloor n/2\rfloor+2$ & \cite{J} (see \eqref{eq:K_1k} above, see also Remark \ref{rem:K_13} )\\ \hline
$Y$ (see Definition \ref{Ydef}) & $\max\{\lfloor n/2\rfloor+2,5\}$ & Proposition \ref{propos:Y}  \\ \hline
$K_{1,3}\cup P_2$ & $\max\{\lfloor n/2\rfloor+2\,,\,6\}$  & Proposition \ref{propos:K_13P_2} 
\\ \hline\hline
$C_3$ & $n$ & \cite{ESS} (see \eqref{eq:C_3} above)\\ \hline
$Q$ (see Definition \ref{Qdef})& $n$ & Proposition \ref{propos:Q}  \\ \hline\hline
\multirow{2}{*}{$3 P_2$} & \multirow{2}{*}{$n+1$} & \cite{S} (for $n\geq 12$), \cite{FKSS} (for $n\geq 7$, see \eqref{eq:tP_2} above),\\
& & \cite{HY} (for $n=6$, see \eqref{eq:perfect} above) \\ \hline
$P_3\cup 2P_2$ & $n+1$ & Proposition \ref{prop:P_32P_2}  \\ \hline
$C_3\cup P_2$ & $\max\{n+1,7\}$ & Proposition \ref{propos:C_3P_2} \\ \hline
$P_4\cup P_2$ & $n+1$ & Proposition \ref{prop:P_4P_2}  \\ \hline
$P_5$ & $n+1$ & Proposition \ref{propos:P_5}, \cite{SS} for large enouh $n$ (see  \eqref{eq:P_k} above) \\ \hline
$2P_3$ & $\max\{n+1,8\}$ &Proposition \ref{propos:2P_3}  \\ \hline\hline
$K_{1,4}$ & $n+2$ & \cite{J} (see \eqref{eq:K_1k} above) \\ \hline
$C_4$ & $\lfloor\frac{4}{3}n\rfloor$ & \cite{A} (see \eqref{eq:C_4} above)  \\ \hline
\multirow{2}{*}{$4 P_2$} &\multirow{2}{*}{$2n-1$} & \cite{S} (for $n\geq 15$), \cite{FKSS} (for $n\geq 9$, see \eqref{eq:tP_2} above),\\
& & \cite{HY} (for $n=8$, see \eqref{eq:perfect} above)\\ \hline
\end{tabular}
\label{table1}
\end{table}

\newpage\section{Notation}

\begin{itemize}
\item The complete graph on a vertex set $V$ will be denoted $K^V$.
\item For any (not necessarily disjoint) sets $A,B\subseteq V$ let $E(A,B):=\{uv\mid u\neq v,u\in A,v\in B\}$. 
\item Let $c$ be an edge-colouring of a $K^V$.
\begin{enumerate}
\item We denote by $c(uv)$ the colour an edge $uv$ has. 
\item For any $v\in V$ let $C(v):=\{c(vw)\mid w\in V-\{v\}\}$ and $d_c(v):=|C(v)|$.
\item For any colour $a$, let $N_c(v;a):= \{ w\in V-\{v\} \mid c(vw) =a \}$.
\end{enumerate}
\end{itemize}

\section{Small graphs for which the anti-Ramsey number is a constant}

\begin{lem}\label{lem:2P_2} $AR(n,2P_2)=\begin{cases}4\quad n=4\\2\quad n\geq 5\,.\end{cases}$
\end{lem}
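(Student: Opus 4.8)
The plan is to establish matching lower and upper bounds, treating the cases $n=4$ and $n\ge 5$ separately, since the obstruction to a multicoloured $2P_2$ is qualitatively different in the tight case $n=4$ (where $K_4$ has only three perfect matchings) than for larger $n$ (where there is always room to spare). Throughout I will use that a copy of $2P_2$ is simply a pair of vertex-disjoint edges, and that a multicoloured such copy is a pair of disjoint edges receiving distinct colours.

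For $n\ge 5$ the lower bound $AR(n,2P_2)\ge 2$ is immediate: an edge-colouring of $K_n$ using a single colour contains no multicoloured $2P_2$, since such a copy requires two edges of distinct colours. For the upper bound, consider any edge-colouring $c$ of $K_n$ using at least two colours and pick two edges $e,f$ with $c(e)\ne c(f)$. If $e$ and $f$ are disjoint we are done. Otherwise $e=uv$ and $f=uw$ with $u,v,w$ distinct; since $n\ge 5$ there exist two further distinct vertices $x,y\in V-\{u,v,w\}$, and $c(xy)$ differs from at least one of $c(uv),c(uw)$ because these two colours are distinct. This yields a multicoloured $2P_2$ consisting of $xy$ together with one of $e,f$. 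Hence $AR(n,2P_2)=2$ for $n\ge 5$.

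For $n=4$ the key observation is that a copy of $2P_2$ in $K_4$ is precisely a perfect matching, and that $E(K^V)$ for $|V|=4$ is the disjoint union of exactly three perfect matchings $M_1,M_2,M_3$. Consequently a colouring of $K_4$ contains no multicoloured $2P_2$ if and only if each $M_i$ is monochromatic, and this forces at most three colours to be used in total. Colouring each $M_i$ with its own colour $i$ then shows $AR(4,2P_2)\ge 4$, while the same equivalence shows that any colouring using at least four colours must render some $M_i$ non-monochromatic, giving $AR(4,2P_2)\le 4$.

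Since everything here is elementary, I do not anticipate a serious obstacle; the only point requiring a little care is that the uniform ``pick a disjoint edge'' argument used for $n\ge 5$ breaks down at $n=4$, which is exactly why the exceptional value appears there and why one must instead exploit the perfect-matching decomposition of $K_4$.
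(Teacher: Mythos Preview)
Your proof is correct and follows essentially the same approach as the paper: for $n\ge 5$ you both pick two differently coloured edges and, if they intersect, use the remaining $\ge 2$ vertices to find a disjoint edge that avoids one of the two colours; for $n=4$ you both exploit that the three copies of $2P_2$ in $K_4$ are exactly the three pairwise edge-disjoint perfect matchings, which the paper states tersely and you spell out in full.
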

\begin{proof} The graph $K_4$ contains exactly three copies of $2P_2$, and they are edge-disjoint, so clearly $AR(4,2P_2)=4$. For $n\geq 5$, obviously $AR(n,2P_2)\geq |E(2P_2)|=2$. On the other hand, for any edge-colouring $c$ of $K_n$ by at least two colours, take two edges $e_1$, $e_2$ with different colours. If they are disjoint, they form a multicoloured copy of $2P_2$. Otherwise, since $n\geq 5$, there is an edge $e_3$ which is disjoint to both $e_1$ and $e_2$. The colour of $e_3$ is different than either $c(e_1)$ or $c(e_2)$ (or both), say $c(e_3)\neq c(e_2)$. Then the edges $e_2$, $e_3$  form a multicoloured copy of $2P_2$.
\end{proof}

\begin{propos}\label{propos:P_4} $AR(n,P_4 )=\begin{cases} 4\quad n=4\\ 3\quad n\geq 5\,.\end{cases}$
\end{propos}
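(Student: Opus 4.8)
The plan is to treat the two regimes $n=4$ and $n\ge 5$ separately, as the stated answer suggests. For $n=4$, the lower bound $AR(4,P_4)\ge 4$ comes from an explicit $3$-colouring with no multicoloured $P_4$: colour the three perfect matchings of $K_4$ with three distinct colours; the two end-edges of any copy of $P_4$ in $K_4$ are vertex-disjoint, hence form a perfect matching, hence receive the same colour. For the upper bound, take any colouring of $K_4$ with at least four colours; the three perfect matchings cannot all be monochromatic (that would use at most three colours), so one of them, $\{e,f\}$, has $c(e)\ne c(f)$. Each of the four remaining edges joins an endpoint of $e$ to an endpoint of $f$, hence completes $\{e,f\}$ to a copy of $P_4$; if every one of them had a colour in $\{c(e),c(f)\}$, then $K_4$ would use only two colours — a contradiction — so one of them, $g$, has a third colour and $e,g,f$ is a multicoloured $P_4$. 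Thus $AR(4,P_4)=4$.

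For $n\ge 5$: since $P_4$ has three edges, $AR(n,P_4)\ge 3$ is trivial, so the task is to show that every edge-colouring $c$ of $K_n$ with exactly three colours $1,2,3$ contains a multicoloured $P_4$; I would assume it does not and derive a contradiction. First, a multicoloured $P_3$ must exist, since otherwise every vertex would be incident with edges of a single colour and, adjacent vertices sharing an edge, $K_n$ would be monochromatic. Fix one, on vertices $a,b,c$ with edges $ab,bc$ and $c(ab)=1\ne 2=c(bc)$, and put $S:=V\setminus\{a,b,c\}$, so $|S|=n-3\ge 2$. The single device used throughout is: if $x$-$y$-$z$ is a multicoloured $P_3$ and $v\notin\{x,y,z\}$, then neither $v$-$x$-$y$-$z$ nor $x$-$y$-$z$-$v$ is multicoloured, which confines $c(vx)$ and $c(zv)$ to the two-element set $\{c(xy),c(yz)\}$; applied to longer already-multicoloured paths it pins colours down completely.

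Applying this to $a$-$b$-$c$ shows no edge from $a$ or from $c$ to $S$ has colour $3$; then, picking two distinct $u,u'\in S$ (the first place $n\ge 5$ is used), the paths $b$-$a$-$c$-$u$ and $u'$-$a$-$c$-$b$ force $c(cu)=1$ and $c(u'a)=2$, so $c(ac)=3$ would make $u'$-$a$-$c$-$u$ multicoloured — hence $c(ac)\ne 3$ too. So colour $3$ avoids $\{a,c\}$ entirely, and every colour-$3$ edge is either of the form $bw$ with $w\in S$ or lies inside $S$. If some $bw$ ($w\in S$) has colour $3$, then $a$-$b$-$w$ and $c$-$b$-$w$ are also multicoloured $P_3$'s, and for a further vertex $u\in S\setminus\{w\}$ the device forces $c(au)=1$, $c(cu)=2$, $c(uw)=3$, after which the non-multicoloured paths $a$-$c$-$u$-$w$ and $c$-$a$-$u$-$w$ force $c(ac)\in\{2,3\}$ and $c(ac)\in\{1,3\}$, i.e. $c(ac)=3$, contradicting the above. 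Otherwise all colour-$3$ edges lie inside $S$; taking one, $pq$ with $p,q\in S$, the device (on $b$-$a$-$p$ extended by the edge $pq$, and on $b$-$c$-$q$ extended by $qp$) gives $c(ap)=1$ and $c(cq)=2$, so $a$-$p$-$q$-$c$ is a multicoloured $P_4$ — the final contradiction. Hence $AR(n,P_4)=3$ for $n\ge 5$.

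I expect no single step to be genuinely hard; the argument is mostly bookkeeping of which extensions of a multicoloured path are forbidden and letting the consequences cascade. The one content-bearing point is the structural claim that colour $3$ cannot meet $a$ or $c$, which is exactly what fails at $n=4$ (there the perfect-matching colouring lets every colour meet every vertex), so the two spots where a second, and later a third, vertex of $S$ is invoked are precisely where the hypothesis $n\ge 5$ does real work and must be checked with a little care.
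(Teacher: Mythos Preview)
Your argument is correct. For $n=4$ it is essentially the paper's proof made self-contained: the paper obtains the lower bound by citing $AR(4,2P_2)=4$, whereas you exhibit the perfect-matching $3$-colouring directly; the upper bounds coincide (find a bichromatic perfect matching, then a transversal edge of a third colour).

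For $n\ge 5$ the two proofs diverge. The paper is vertex-centred: it picks a vertex $v$ with $d_c(v)\ge 2$ and splits on whether $d_c(v)=2$ or $d_c(v)=3$, in each case producing a multicoloured $P_4$ through $v$ by a short local case analysis. Your proof is path-centred: you fix a multicoloured $P_3$ on $a,b,c$, use the ``forbidden extension'' device to show the third colour cannot touch the endpoints $a,c$, and then split on whether it touches $b$ or lies entirely in $S$. Your approach makes the role of the hypothesis $n\ge 5$ more transparent (it is exactly what lets you find a second vertex in $S$ to break the perfect-matching obstruction on $K_4$), and the extension device is a reusable tool; the paper's argument is shorter and more direct but more ad hoc. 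One presentational point to tighten in a full write-up: when you say ``the paths $b\text{--}a\text{--}c\text{--}u$ and $u'\text{--}a\text{--}c\text{--}b$ force $c(cu)=1$ and $c(u'a)=2$'', those conclusions are only valid under the standing hypothesis $c(ac)=3$; state that hypothesis first so the reader sees the forcing actually bites.
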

\begin{proof} Clearly $AR(4,P_4)\geq AR(4,2P_2)$, so by Lemma \ref{lem:2P_2}, $AR(4,P_4)\geq 4$. On the other hand, in any edge-colouring of $K_4$ by at least $4$ colours there are, again by Lemma \ref{lem:2P_2}, two disjoint edges $e_1$, $e_2$, coloured by different colours. There are at least two other edges coloured differently than $e_1$ and $e_2$, and each of them completes $e_1$ and $e_2$ to a multicoloured copy of $P_4$.

For $n\geq 5$, obviously $AR(n,P_4)\geq|E(P_4)|=3$. 
For the upper bound, let $c$ be any edge-colouring of $K_n$ by exactly (see Remark \ref{exactly}) $3$ colours. 
Take a vertex $v$ such that $d_c(v)>1$. 

If $d_c(v)=2$, take an edge $xy$, such that $N_c(v;c(xy))=\emptyset$. If $c(vx)=c(vy)$, take a vertex $u$ such that $c(vu)\neq c(vx)$ and then $(uvxy)$ is a  multicoloured copy of $P_4$. If $c(vx)\neq c(vy)$, take some vertex $w\notin\{v,x,y\}$. Since $d_c(v)=2$, either $c(vw)=c(vx)$ or $c(vw)=c(vy)$, and then either $(xyvw)$ or $(yxvw)$, respectively, is a  multicoloured copy of $P_4$.

If $d_c(v)=3$ then since $n\geq 5$ there is a colour $a$ for which $|N_c(v;a)|\geq 2$. Take $x_1,x_2,y_1,y_2$ such that $c(vx_1)=c(vx_2)=a$ and $a\neq c(vy_1)\neq c(vy_2)\neq a$. 
Since $c$ uses only $3$ colours, $c(x_1y_1)$ is either $c(vy_1)$, $c(vy_2)$ or $a$ and then either $(y_1x_1vy_2)$, $(x_1y_1vx_2)$ or $(x_1y_1vy_2)$, respectively, is a  multicoloured copy of $P_4$.
\end{proof}

\begin{propos}\label{propos:P_3P_2} $AR(n,P_3 \cup P_2 ) = 3$ for any $n\geq 5$.
\end{propos}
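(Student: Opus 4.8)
The bound $AR(n,P_3\cup P_2)\geq 3$ is immediate: $P_3\cup P_2$ has three edges, so a $2$-colouring of $K_n$ has no multicoloured copy. For the reverse bound, by Remark \ref{exactly} it suffices to show that whenever $c$ colours $E(K_n)$, $n\geq 5$, with \emph{exactly} three colours $\{1,2,3\}$, there is a multicoloured $P_3\cup P_2$. Assume not. The one tool I will use repeatedly is: if $xyz$ is a multicoloured $P_3$ (i.e.\ $c(xy)\neq c(yz)$), then every edge of $K^{V-\{x,y,z\}}$ is coloured $c(xy)$ or $c(yz)$, since otherwise that edge extends $xyz$ to a multicoloured $P_3\cup P_2$; equivalently, for any edge $e$ the complete graph on $V-e$ contains no multicoloured $P_3$ avoiding the colour $c(e)$. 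I also use the trivial remark that a triangle whose three edges are not all of one colour contains a multicoloured $P_3$.

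Note $\max_v d_c(v)\geq 2$, since otherwise, $K_n$ being connected, only one colour is used. \textbf{Case 1: $d_c(v)=3$ for some $v$.} Pick $a,b,c$ with $c(va)=1$, $c(vb)=2$, $c(vc)=3$. The paths $avb$, $avc$, $bvc$ are multicoloured with colour sets $\{1,2\},\{1,3\},\{2,3\}$, so by the tool every edge of $K^{V-\{v,a,b,c\}}$ would be coloured from $\{1,2\}\cap\{1,3\}\cap\{2,3\}=\emptyset$; hence $n=5$ and $V-\{v,a,b,c\}=\{d\}$. The same tool applied to the three paths gives $c(ad)\in\{2,3\}$, $c(bd)\in\{1,3\}$, $c(cd)\in\{1,2\}$. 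Since $c(vc)=3$, the triangle on $\{a,b,d\}$ must be monochromatic (else its multicoloured $P_3$ and the edge $vc$ form a multicoloured $P_3\cup P_2$), which together with $c(ad)\in\{2,3\}$ and $c(bd)\in\{1,3\}$ forces $c(ab)=c(ad)=c(bd)=3$. Then the triangle on $\{v,c,d\}$ must likewise be monochromatic (using the colour-$3$ edge $ab$), so $c(cd)=c(vc)=3$, contradicting $c(cd)\in\{1,2\}$.

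\textbf{Case 2: $d_c(v)\leq 2$ for all $v$.} For each edge $uv$ we have $C(u)\cap C(v)\neq\emptyset$, so the colour-sets $C(v)$ form a pairwise-intersecting family of subsets of $\{1,2,3\}$ of size $\leq 2$; hence either they all share a colour, or none is a singleton and all three pairs $\{1,2\},\{1,3\},\{2,3\}$ occur. Either way the colouring is highly structured: $V$ splits into at most three classes according to $C(v)$, for any two classes all edges between them carry the single colour lying in the intersection of the two corresponding colour-sets (so this colour is forced), and the intra-class edges of a class use only that class's colours. In the first case the $\{1,2\}$- and $\{1,3\}$-classes each have at least two vertices (they must carry colours $2$ and $3$); in the second case all three pair-classes are nonempty; and since $n\geq 5$, in each situation some class is large enough that one can select a multicoloured $P_3$ meeting at least two of the classes — its edges being a forced inter-class edge and a suitable intra-class or second inter-class edge — together with a disjoint edge of the remaining colour, producing a multicoloured $P_3\cup P_2$, a contradiction. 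Carrying this out is a short case check on the class sizes and on where each colour can sit inside its class.

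The only real effort is in Case 2: one must list the few rigid colourings with all colour-degrees $\leq 2$ and, for each, exhibit the multicoloured $P_3$ and the disjoint edge without any forced coincidence of vertices — which is exactly where the hypothesis $n\geq 5$ is used. The most delicate configuration is a single large class $P_{12}$ together with two singleton classes, where colour $3$ occupies just one edge, so one has to route the $P_3$ through that edge rather than try to use it as the disjoint $P_2$.
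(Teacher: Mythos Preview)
Your approach differs from the paper's: the paper obtains a multicoloured $2P_2$ via Lemma~\ref{lem:2P_2} and then attaches a third-colour edge $z_1z_2$, doing a clean three-case split on $|\{z_1,z_2\}\cap\{x_1,x_2,y_1,y_2\}|$. Your route instead branches on the maximum colour-degree. That is a reasonable strategy in principle, but as written the argument is not complete.

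In Case~1 the key step is wrong. You assert that the triangle on $\{a,b,d\}$ must be monochromatic because ``its multicoloured $P_3$ and the edge $vc$ form a multicoloured $P_3\cup P_2$''. But a non-monochromatic triangle may have all its multicoloured $P_3$'s using colour~$3$: for instance $c(ad)=2$, $c(bd)=3$, $c(ab)=3$ is consistent with your constraints $c(ad)\in\{2,3\}$, $c(bd)\in\{1,3\}$, the triangle is not monochromatic, yet every multicoloured $P_3$ inside it uses colours $\{2,3\}$, so adjoining $vc$ (colour~$3$) does \emph{not} give a multicoloured $P_3\cup P_2$. The conclusion $c(ab)=c(ad)=c(bd)=3$ therefore does not follow from the reason you give, and the subsequent contradiction collapses. (Also, using $c$ for both the colouring and a vertex is asking for trouble.)

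Case~2 is not a proof but a plan: you describe the class structure and then say ``Carrying this out is a short case check'' and ``one must list the few rigid colourings\ldots and, for each, exhibit the multicoloured $P_3$ and the disjoint edge''. That check is indeed doable, but you have not done it; in particular the ``most delicate configuration'' you flag at the end is precisely where the argument needs to be written out, not waved at. Since by your own admission ``the only real effort is in Case~2'', what remains is essentially the whole proof of that case.
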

\begin{proof}
Obviously, $AR(n,P_3\cup P_2)\geq|E(P_3\cup P_2)|=3$. For the upper bound, observe that in any edge-colouring $c$ of $K_n$ by at least $3$ colours, there is, by Lemma \ref{lem:2P_2}, a multicoloured copy of $2P_2$, i.e., four distinct vertices $x_1, x_2, y_1, y_2$ such that $c(x_1x_2)\neq c(y_1y_2)$. Take an edge $z_1z_2$ with another colour.
We divide the rest of the proof to three cases.
\medskip\newline {\bf Case 1.} $|\{z_1, z_2\}\cap\{x_1, x_2, y_1, y_2\}|=1$. 
\medskip\newline
In this case the three edges $x_1x_2, y_1y_2, z_1z_2$ form a multicoloured copy of $P_3 \cup P_2$.
\medskip\newline {\bf Case 2.}  $\{z_1, z_2\}\cap\{x_1, x_2, y_1, y_2\}=\emptyset$. 
\medskip\newline 
$c(x_1y_1)$ is different than the colour of at least one of the edges $x_1x_2, y_1y_2$, say $x_1x_2$. If $c(x_1y_1)\neq c(z_1z_2)$ then $(y_1x_1x_2)\cup(z_1z_2)$  is a  multicoloured copy of $P_3 \cup P_2$.
Therefore we may assume that $c(x_1y_1)=c(z_1z_2)$, and similarly, that $c(y_1z_1)=c(x_1x_2)$ and $c(x_2z_2)=c(y_1y_2)$, but then $(x_1y_1z_1)\cup(x_2z_2)$ is a  multicoloured copy of $P_3 \cup P_2$.
\medskip\newline {\bf Case 3.}  $\{z_1, z_2\}\subset\{x_1, x_2, y_1, y_2\}$. 
\medskip\newline
With no loss of generality assume that $\{z_1, z_2\}=\{x_2, y_2\}$. Take a vertex $u\notin\{x_1,x_2, y_1, y_2\}$. If $c(ux_1)\notin\{c(y_1y_2), c(x_2y_2)\}$ then $(x_2y_2y_1)\cup(ux_1)$ is a  multicoloured copy of $P_3 \cup P_2$, and if $c(ux_1)= c(x_2y_2)$ then $(ux_1x_2)\cup(y_1y_2)$ is a  multicoloured copy of $P_3 \cup P_2$. Therefore we may assume that $c(ux_1)=c(y_1y_2)$, and similaly that $c(uy_1)=c(x_1x_2)$, but then $(x_1uy_1)\cup(x_2y_2)$ is a multicoloured copy of $P_3 \cup P_2$. 
\end{proof}

\section{Small graphs for which $AR(n,G)=\lfloor n/2\rfloor+2$}

\begin{udefin}
Let $c_{\text{matching}}$ be the edge-colouring of $K_n$ by $\lfloor n/2\rfloor+1$ colours, in which all edges of some chosen maximal matching are coloured by distinct colours, and all other edges are coloured by one additional colour.
\end{udefin}

\begin{lem}\label{stars} If $c$ is an edge-colouring of $K_n$ by exactly $r$ colours and $d_c(v)\leq 2$ for any vertex $v$, then $r\leq \max\{\lfloor n/2\rfloor+1\,,\,3\}$.
\end{lem}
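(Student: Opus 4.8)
The plan is to think of the colouring $c$ as encoding, for each vertex $v$, a partition of the remaining $n-1$ vertices into at most $d_c(v)\le 2$ colour classes. First I would handle the easy exceptional part: if $d_c(v)\le 1$ for \emph{every} vertex $v$, then actually only one colour is used (the graph is connected), so $r=1\le 3$; and if some vertex has degree at most $2$ while another structure forces few colours, the bound $r\le 3$ will absorb all small or degenerate cases. So the real work is the regime where many vertices $v$ have $d_c(v)=2$, and I want to show that then $r\le\lfloor n/2\rfloor+1$.

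The key idea is a counting/charging argument. For a vertex $v$ with $d_c(v)=2$, write $C(v)=\{a_v,b_v\}$ where, say, $b_v$ is the colour of the \emph{larger} of the two classes $N_c(v;a_v)$, $N_c(v;b_v)$ (break ties arbitrarily); then $|N_c(v;a_v)|\le (n-1)/2$, i.e.\ the "minority" colour $a_v$ at $v$ appears on at most $\lfloor(n-1)/2\rfloor$ edges at $v$. The plan is to show that every colour used by $c$ is the minority colour $a_v$ at \emph{some} vertex $v$, except possibly for a bounded number of exceptions, and then to bound the number of colours by summing. More precisely: fix a colour $\alpha$ and an edge $xy$ with $c(xy)=\alpha$. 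If $d_c(x)=2$, then $\alpha\in\{a_x,b_x\}$; if $\alpha=a_x$ we are done, and if $\alpha=b_x$ for the majority colour, then I would like to say the \emph{other} colour $a_x$ also gets "seen," and use an incidence/double-counting between colours and vertices-where-they-are-minority to control the total count. The cleanest route is: count pairs $(v,\alpha)$ with $d_c(v)=2$ and $\alpha=a_v$ the minority colour at $v$; there are at most $n$ such pairs (one per vertex), and I would argue each colour $\alpha$ arises as such a minority colour, or else falls into a small explicitly-listed set of exceptions (colours that are the majority colour at both endpoints of all their edges), whose size I bound by a constant.

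The step I expect to be the main obstacle is precisely pinning down those exceptional colours — the colours that are never anyone's minority colour. Such a colour $\alpha$ has the property that at every vertex $v$ incident to an $\alpha$-edge, $\alpha$ is the larger of the two classes at $v$ (or $d_c(v)=1$, forcing $C(v)=\{\alpha\}$). I would analyze the subgraph $H_\alpha$ of $\alpha$-coloured edges: at each of its vertices it carries more than half the edges, which for $n$ not too small forces $H_\alpha$ to be very dense, and two such dense monochromatic subgraphs in different colours cannot coexist — so there is at most one such colour, contributing the "$+1$", and then the inequality $r\le \lfloor n/2\rfloor+1$ follows by adding the $\le\lfloor(n-1)/2\rfloor\le\lfloor n/2\rfloor$ minority colours counted via the vertices. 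For the small-$n$ cases where the density argument for $H_\alpha$ is too weak to be decisive, the bound degrades only to the constant $3$, which is exactly the max in the statement, so those cases are dispatched by inspection. I would also double-check the boundary: when $n$ is even versus odd, whether the extremal colouring $c_{\text{matching}}$ (which achieves $\lfloor n/2\rfloor+1$ with all degrees $\le 2$) is consistent with the bound, confirming tightness.
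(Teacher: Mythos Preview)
Your counting argument has a genuine gap at the final step. You count pairs $(v,\alpha)$ with $\alpha=a_v$ the minority colour at $v$; there are at most $n$ such pairs, one per vertex. From this you want to conclude that the number of non-exceptional colours is at most $\lfloor n/2\rfloor$, but the pair count only gives at most $n$. The quantity $\lfloor(n-1)/2\rfloor$ you invoke at the end is the size of the minority \emph{class} $N_c(v;a_v)$ at a single vertex --- it bounds how many edges of colour $a_v$ touch $v$, not how many distinct colours occur as some $a_v$ across all vertices. These are unrelated, and nothing in your outline bridges them. (In the extremal colouring $c_{\text{matching}}$, each matching colour is the minority at exactly two vertices, so the pair count is roughly $2\lfloor n/2\rfloor\approx n$; only an extra factor of $2$, which you never establish in general, would rescue the bound.)

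The paper's argument bypasses counting via a direct structural observation you are missing: the sets $C(v)$ are pairwise intersecting (since $c(v_1v_2)\in C(v_1)\cap C(v_2)$) and each has size at most~$2$. A family of pairwise-intersecting sets of size $\le 2$ either has a common element, or consists only of the three $2$-subsets of a $3$-set; the latter case gives $r\le 3$ outright. In the former case, call the common colour $a$. Then two edges of distinct non-$a$ colours cannot share a vertex (that vertex would see three colours), so one edge per non-$a$ colour yields a matching and hence at most $\lfloor n/2\rfloor$ non-$a$ colours, giving $r\le\lfloor n/2\rfloor+1$. Your density analysis of $H_\alpha$ is groping toward the existence of this common colour, but once you have it the matching observation is the correct finish, not a per-vertex minority count.
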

\begin{proof} For any vertex $v$, $|C(v)|=d_c(v)\leq 2$, and for any distinct vertices $v_1,v_2$, $C(v_1)\cap C(v_2)\neq\emptyset$ (since $c(v_1v_2)\in C(v_1)\cap C(v_2)$).
If $|\bigcap_{v\in V}C(v)|=2$ then clearly $r=2$. If $\bigcap_{v\in V}C(v)=\emptyset$ then it is easy to see that $r\leq 3$. Finally, if $\bigcap_{v\in V}C(v)=\{a\}$ for some colour $a$ then 
edges of different colours which are not $a$ are necessarily disjoint, hence $r\leq \lfloor n/2\rfloor+1$.
\end{proof}

\begin{rem}\label{rem:K_13} It follows immediately from Lemma \ref{stars} that for any $n\geq 4$, $AR(n, K_{1,3})\leq \lfloor n/2\rfloor+2$. On the other hand, the colouring $c_{\text{matching}}$ of $K_n$ shows that $AR(n, K_{1,3})>\lfloor n/2 \rfloor+1$, hence $AR(n, K_{1,3})= \lfloor n/2 \rfloor+2$, as was proved in \cite{J} (see \eqref{eq:K_1k}).
\end{rem}

\begin{defin}\label{Ydef} Let $Y$ be the graph obtained from a star $K_{1,3}$ by adding a vertex and an edge connecting it to one leaf of the star.
\end{defin}

\begin{propos}\label{propos:Y} $AR(n,Y)=\max\{\lfloor n/2\rfloor+2\,,\,5\}$ for any $n\geq 5$. 
\end{propos}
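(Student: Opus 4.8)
The plan is to prove the two bounds separately: the lower bound by exhibiting two colourings realising the two terms of the maximum, and the upper bound by a structural analysis of colourings of $K_n$ with no multicoloured $Y$.

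For the lower bound, first consider $c_{\text{matching}}$, which uses $\lfloor n/2\rfloor+1$ colours. A copy of $Y$ has at most one edge outside the chosen maximal matching, and since the matching number of $Y$ is $2$ it has at most two edges inside the matching; so a copy of $Y$ can carry at most three distinct colours, and $c_{\text{matching}}$ has no multicoloured $Y$. This gives $AR(n,Y)\geq\lfloor n/2\rfloor+2$. Second, colour the three edges of a fixed triangle of $K_n$ with three distinct colours and all other edges with a fourth colour; a multicoloured copy of $Y$ would have to use all four colours, hence contain all three triangle edges, which is impossible because $Y$ is triangle-free. This gives $AR(n,Y)\geq 5$, so $AR(n,Y)\geq\max\{\lfloor n/2\rfloor+2,5\}$.

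For the upper bound, by Remark \ref{exactly} it suffices to show that every colouring $c$ of $K_n$ by exactly $r:=\max\{\lfloor n/2\rfloor+2,5\}$ colours has a multicoloured $Y$; suppose not. Since $r>\max\{\lfloor n/2\rfloor+1,3\}$, Lemma \ref{stars} yields a vertex of colour-degree $\geq 3$; pick $v$ of maximum colour-degree $m\geq 3$ and fix neighbours $a_1,\dots,a_m$ whose edges to $v$ realise $m$ distinct colours. The one fact used repeatedly is: for $u\notin\{v,a_1,\dots,a_m\}$ and distinct $i,j,k$, absence of a multicoloured $Y$ with centre $v$, leaves $a_i,a_j,a_k$ and pendant edge $a_iu$ forces $c(a_iu)\in\{c(va_i),c(va_j),c(va_k)\}$; intersecting this over several triples pins edge colours down. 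If $m\geq 5$ this forces $c(a_ia_j)$ to equal both $c(va_i)$ and $c(va_j)$, a contradiction; if $m=4$ it forces $c(a_iu)=c(va_i)$ for all such $u$, and then any two such vertices give a multicoloured $Y$ (a rainbow $3$-star at one of them completed by a pendant edge of a fresh colour), so $n\leq 6$, whereupon a short check shows every edge of $K_n$ is coloured from $\{1,2,3,4\}$ and $r\leq 4$, contradicting $r\geq 5$. So $m=3$; write $a,b,c$ for the chosen neighbours with $c(va)=1,c(vb)=2,c(vc)=3$. The fact above gives $c(ua),c(ub),c(uc),c(uv)\in\{1,2,3\}$ for all $u\in R:=V\setminus\{v,a,b,c\}$, so every colour other than $1,2,3$ occurs only on the triangle $abc$ or inside $R$. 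I would then show, for $n\geq 6$, that the triangle carries no new colour: if $c(ab)=\alpha\notin\{1,2,3\}$, then for each $u\in R$ a suitable rainbow $3$-star at $v$ together with the pendant edge $ab$ of colour $\alpha$ forces $c(vu)=3$; but using the star $\{a,b,u\}$ (colours $1,2,3$) and varying the pendant vertex then forces $c(ac),c(bc)\in\{1,2,3\}$ and no new colour inside $R$, so $r\leq 4$, a contradiction. Hence all new colours lie inside $R$, and a similar analysis — taking a rainbow $3$-star at a vertex $d\in R$ incident to two $R$-edges of distinct new colours, completed by the pendant edge $vb$ of colour $2$ — rules out such a vertex, so the new colours lie on a matching of $K^R$ and number at most $\lfloor|R|/2\rfloor=\lfloor n/2\rfloor-2$. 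Therefore $r\leq 3+(\lfloor n/2\rfloor-2)=\lfloor n/2\rfloor+1$, contradicting $r=\lfloor n/2\rfloor+2$. For $n=5$ one argues directly that at most one new colour occurs on the triangle and none inside the one-vertex set $R$, giving $r\leq 4<5$.

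The main obstacle will be the $m=3$ case: establishing that the triangle $abc$ is coloured within $\{1,2,3\}$ and that the new colours inside $R$ sit on a matching of $K^R$. Both steps amount to producing a multicoloured $Y$ out of a rainbow $3$-star plus a carefully chosen pendant edge leading back to $v$ or to $\{a,b,c\}$, and they split into several sub-cases according to which colours the offending vertex of $R$ sees on $\{v,a,b,c\}$ (in particular, one must argue that this vertex is monochromatic towards $\{v,a,b,c\}$ and that that colour is $1$). It is also in this case that the small values $n\in\{5,6,7\}$, where $r=5$, have to be checked by hand, using the same tools on a bounded number of edges.
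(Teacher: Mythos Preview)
Your lower bound matches the paper's exactly. For the upper bound, the paper takes a much more direct route: given any vertex $u$ with $d_c(u)\geq 3$ (not necessarily of maximum colour-degree), it simply \emph{exhibits} a multicoloured $Y$ in each of the cases $d_c(u)\geq 5$, $d_c(u)=4$, $d_c(u)=3$, by choosing a suitable edge $v_1v_2$ (with $c(v_1v_2)\notin C(u)$ when $d_c(u)\leq 4$) and completing it with three well-chosen edges $uv_2,uv_3,uv_4$. There is no contradiction argument, no maximality of $m$, and no analysis of the set $R$; the whole upper bound is a few lines per case. Your structural route---assume no multicoloured $Y$ and squeeze $r$---is valid and arguably more conceptual, but considerably longer.

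There is, however, a genuine gap in your $m=4$ step. With $u_1,u_2\notin\{v,a_1,\dots,a_4\}$ and $c(u_ja_i)=c(va_i)$ for all $i,j$, a rainbow $3$-star at $u_1$ with leaves among the $a_i$ has every candidate pendant edge (to $v$, to $u_2$, or to the remaining $a_j$) coloured from the three leaf-colours already present, so no ``pendant of a fresh colour'' exists and your claimed multicoloured $Y$ does not materialise. The repair is easy and needs only one outside vertex $u$: take the star at $u$ with leaves $a_i,a_j,v$, where $c(uv)=k\in\{1,2,3,4\}\setminus\{i,j\}$, and use the pendant $va_\ell$ with $\ell$ the remaining index; this \emph{is} a multicoloured $Y$, so already $n\leq 5$, and then $r\leq 4$ as you say. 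Your $m=3$ sketch is essentially correct and in fact already handles $n\in\{5,6,7\}$ without separate hand-checks, since the bound it yields is $r\leq\lfloor n/2\rfloor+1\leq 4<5$ in those cases.
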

\begin{proof} {\bf Lower bound:} To show that $AR(n,Y)>\lfloor n/2\rfloor+1$, we use the colouring $c_{\text{matching}}$ of $K_n$. To show that $AR(n,Y)>4$, colour the edges of some triangle by distinct colours, and all other edges of $K_n$ by one additional colour. 

{\bf Upper bound:} Let $c$ be any edge-colouring of $K_n$ by at least $\max\{\lfloor n/2\rfloor+2\,,\,5\}$  colours. By Lemma \ref{stars} there is a vertex $u$ such that $d_c(u)\geq 3$.

If $d_c(u)\geq 5$, take vertices $v_1,v_2,v_3,v_4,v_5$ such that $c(uv_1),c(uv_2),c(uv_3),c(uv_4),c(uv_5)$ are distinct. The colour $c(v_1v_2)$ is different than at least one of the colours $c(uv_1),c(uv_2)$ and at least two of  the colours $c(uv_3),c(uv_4),c(uv_5)$. With no loss of generality, assume that $c(v_1v_2)$ is different than $c(uv_2)$, $c(uv_3)$ and $c(uv_4)$. The edges $v_1v_2,uv_2,uv_3,uv_4$ form a multicoloured copy of $Y$.

If $d_c(u)=4$ then since $\max\{\lfloor n/2\rfloor+2\,,\,5\}>4$ there is some edge $v_1v_2$ such that $c(v_1v_2)\notin C(u)$. Take vertices $v_3,v_4$ such that  $c(uv_3),c(uv_4)$ are distinct and different than $c(uv_1),c(uv_2)$. The edges $v_1v_2,uv_2,uv_3,uv_4$ form a multicoloured copy of $Y$.

If $d_c(u)=3$ then since $n\geq 5$, there is at most one edge $v_1v_2$ such that $|N_c(c(uv_1),u)|=|N_c(c(uv_2),u)|=1$. Therefore, there must be at least one edge $v_1v_2$ such that $c(v_1v_2)\notin C(u)$ and $|N_c(c(uv_i),u)|\geq 2$ for at least one $i\in\{1,2\}$, say $i=1$.
If $c(uv_1)=c(uv_2)$, take vertices $x_1,x_2$ such that $c(ux_1),c(ux_2)$ are the additional two colours in $C(u)$; The edges $v_1v_2,uv_2,ux_1,ux_2$ form a multicoloured copy of $Y$.
If $c(uv_1)\neq c(uv_2)$, take $v_1\neq y\in N_c(c(uv_1),u)$ and a vertex $z$ such that $c(uz)\notin\{c(uv_1),c(uv_2)\}$; The edges $v_1v_2,uv_2,uy,uz$ form a multicoloured copy of $Y$.
\end{proof}

\begin{propos}\label{propos:K_13P_2} $AR(n,K_{1,3}\cup P_2)=\max\{\lfloor n/2\rfloor+2\,,\,6\}$ for any $n\geq 6$. 
\end{propos}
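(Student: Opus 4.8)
The plan is to prove the two inequalities separately.

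\emph{Lower bound.} I would exhibit two colourings of $K_n$ with no multicoloured $K_{1,3}\cup P_2$. First, $c_{\text{matching}}$: every vertex $v$ has $d_c(v)\le 2$, so $K_n$ contains no multicoloured $K_{1,3}$ at all, hence no multicoloured $K_{1,3}\cup P_2$, and so $AR(n,K_{1,3}\cup P_2)>\lfloor n/2\rfloor+1$. Second, colour the four edges of a fixed $4$-cycle $v_1v_2v_3v_4$ of $K_n$ by four distinct colours and all remaining edges by a fifth colour, using exactly $5$ colours. Any multicoloured $K_{1,3}\cup P_2$ has four edges of four distinct colours; it cannot use all four cycle-colours, since those occur only on the four cycle edges, which form a $4$-cycle, not a subgraph of the forest $K_{1,3}\cup P_2$. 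So it uses the fifth colour together with three cycle edges; but those three edges form a path on four vertices, which is not a subgraph of $K_{1,3}\cup P_2$ (whose longest path has only three vertices) --- contradiction. Thus $AR(n,K_{1,3}\cup P_2)>5$, and together with the first colouring, $AR(n,K_{1,3}\cup P_2)\ge\max\{\lfloor n/2\rfloor+2,6\}$.

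\emph{Upper bound.} Let $c$ colour $K_n$ by exactly $r:=\max\{\lfloor n/2\rfloor+2,6\}$ colours (Remark \ref{exactly}), and assume there is no multicoloured $K_{1,3}\cup P_2$. Since $r>\max\{\lfloor n/2\rfloor+1,3\}$, Lemma \ref{stars} gives a vertex $u$ with $d_c(u)\ge 3$; choose $u$ with $d_c(u)$ maximal. The key observation is: whenever $\{wq_1,wq_2,wq_3\}$ is a multicoloured $K_{1,3}$, every edge inside $V\setminus\{w,q_1,q_2,q_3\}$ has one of the colours $c(wq_1),c(wq_2),c(wq_3)$, for otherwise that edge completes the star to a multicoloured $K_{1,3}\cup P_2$.

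Applying this to stars centred at $u$ I would first bound $d_c(u)$: if $d_c(u)\ge 5$, a colour outside $C(u)$ lies on an edge $q_iq_j$ between two leaves of $u$, and the star at $u$ whose leaves carry the other three colours has $q_iq_j$ in its complement, a contradiction; so $d_c(u)\le 4$, and a similar argument with four leaves (intersecting the complements of the four stars obtained by dropping one leaf) forces $d_c(u)\le 3$ once $n\ge 7$. Next, using the freedom to choose which vertex of $N_c(u;a)$ is the $a$-coloured leaf, the observation shows that any edge coloured outside $C(u)$ must, as a pair of vertices, contain a whole class $N_c(u;a)$, so that class has size at most $2$. Hence every colour not in $C(u)$ occurs only inside a class of size $2$ or on an edge incident to a singleton class; I would then count these, using $d_c(v)\le d_c(u)\le 4$ to limit how many new colours any singleton-class vertex can carry, and combine with the bound on $d_c(u)$ to contradict $r\ge\max\{\lfloor n/2\rfloor+2,6\}$.

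The \textbf{main obstacle} is the final counting step: the crude confinement above still permits a few colours outside $C(u)$, and to reach the exact bound $r\le\max\{\lfloor n/2\rfloor+1,5\}$ one must extract more from the key observation --- typically by examining stars centred at the singleton-class vertices, and cross-edges of $K_n$, to show that a second new colour around $u$ already produces a multicoloured $K_{1,3}\cup P_2$ --- and to treat by hand the small cases $n\in\{6,7\}$, where the target $r=6$ is too small for the generic count, together with the case $d_c(u)=4$, which arises only for $n=6$. In each residual case a direct analysis of the colours that the observation forces on the edges near $u$ locates the desired multicoloured $K_{1,3}\cup P_2$.
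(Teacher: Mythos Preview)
Your approach matches the paper's: the same two lower-bound colourings; for the upper bound, Lemma~\ref{stars} produces a vertex $u$ with $d_c(u)\ge 3$, the case $d_c(u)\ge 4$ is eliminated, and when $d_c(u)=3$ one bounds the number of colours via the partition $\{N_c(u;a)\}_{a\in C(u)}$---your ``key observation'' is precisely the paper's pair of structural facts (edges inside a class of size $\ge 3$, or between two classes of size $\ge 2$, must use colours from $C(u)$), repackaged. Two small corrections: first, the paper handles $d_c(u)\ge 4$ in one line (for $n\ge 7$ pick two further vertices $w,z$; then $c(wz)$ avoids at least three of the four leaf-colours), which sidesteps your unjustified assumption in the $d_c(u)\ge 5$ step that some colour lies outside $C(u)$---this can fail when $d_c(u)\ge r$, though the $d_c(u)\ge 4$ argument already subsumes it for $n\ge 7$; second, the residual hand-checked range is $7\le n\le 9$ (where $r=6$ and the count yields only $\le 6$), not just $n\in\{6,7\}$, and the paper, like you, omits the details for $n=6$.
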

\begin{proof} {\bf Lower bound:} To show that $AR(n,K_{1,3}\cup P_2)>\lfloor n/2\rfloor+1$ we use the colouring $c_{\text{matching}}$ of $K_n$. To show that $AR(n,K_{1,3}\cup P_2)>5$ colour the edges of some cycle of length $4$ by distinct colours, and all other edges of $K_n$ by one additional colour. 

{\bf Upper bound:} We omit the proof for $n=6$, which is a simple but tedious case analysis, and assume that $n\geq 7$.
Let $c$ be any edge-colouring of $K_n$ by at least $\max\{\lfloor n/2\rfloor+2\,,\,6\}$  colours. By Lemma \ref{stars}, there is a vertex $u$ such that $d_c(u)\geq 3$.

If $d_c(u)\geq 4$, take $v_1,v_2,v_3,v_4\neq  u$ such that $uv_1,uv_2,uv_3,uv_4$ have different colours, and two additional vertices $w,z\notin\{u,v_1,v_2,v_3,v_4\}$. At most one of the edges $uv_1,uv_2,uv_3,uv_4$, say $uv_4$, is coloured by $c(wz)$, and then the edges $uv_1,uv_2,uv_3,wz$ form a multicoloured copy of $K_{1,3}\cup P_2$. 

If $d_c(u)=3$ and assume, by contradiction, that there is no multicoloured copy of $K_{1,3}\cup P_2$. By what we just shown, it follows that $d_c(v)\leq 3$ for any vertex $v$. For any $a\in C(u)$ such that $|N_c(u;a)|\geq 3$, all edges of $K^{N_c(u;a)}$ must be coloured by colours from $C(u)$. For any $a_1,a_2\in C(u)$ such that $|N_c(u;a_1)|,|N_c(u;a_2)|\geq 2$, all edges in $E(N_c(u;a_1),N_c(u;a_2))$ must be coloured by colours from $C(u)$. Combining all these we get, by a simple case analysis, that the total number of colours $c$ uses is at most $6$ if the multiset $\{|N_c(u;a)|\}_{a\in C(u)}$ is either $\{2,2,2\}$ (when $n=7$), $\{1,1,n-3\}$ or $\{1,2,n-4\}$, and at most $5$ otherwise. We then immediately get a contradiction if $n\geq 10$, for which $\lfloor n/2\rfloor+2>6$. For $7\leq n\leq 9$, a further, simple but somewhat tedious, examination of the three cases mentioned above is needed. 
\end{proof}

\section{A small graph for which $AR(n,G)=n$}

\begin{defin}\label{Qdef} Let $Q$ be the graph obtained from a triangle $C_3$ by adding a vertex and an edge connecting it to one vertex of the triangle.
\end{defin}
\begin{propos}\label{propos:Q} $AR(n,Q)=n$ for any $n\geq 4$.
\end{propos}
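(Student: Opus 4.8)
The plan is to prove matching bounds. For the lower bound $AR(n,Q)\geq n$, I would exhibit an edge-colouring of $K_n$ by exactly $n-1$ colours with no multicoloured copy of $Q$. Since $Q$ contains a triangle, any colouring avoiding a multicoloured $C_3$ automatically avoids a multicoloured $Q$; by \eqref{eq:C_3}, $AR(n,C_3)=n$, so there is a colouring of $K_n$ by $n-1$ colours with no rainbow triangle, and this already gives $AR(n,Q)\geq n$. (Concretely one can take the colouring where $c(uv)$ depends only on $\min\{u,v\}$ under some fixed linear order, a so-called Gallai-type colouring.)

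For the upper bound $AR(n,Q)\leq n$, let $c$ be any edge-colouring of $K_n$ by exactly $n$ colours (using Remark \ref{exactly}); I must find a multicoloured $Q$. First I would locate a multicoloured (rainbow) triangle: since $n>AR(n,C_3)-1$, actually $n=AR(n,C_3)$, so by definition any colouring by at least $n$ colours contains a rainbow $C_3$, say on vertices $x,y,z$ with $c(xy),c(yz),c(zx)$ all distinct. It then remains to attach a pendant edge of a fourth colour. The natural approach: for each of the three vertices $x,y,z$ and each vertex $w\notin\{x,y,z\}$, the edge from that vertex to $w$ extends the triangle to a copy of $Q$, and it is multicoloured unless $c$ of that pendant edge lies in $\{c(xy),c(yz),c(zx)\}$. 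So if no multicoloured $Q$ exists, then \emph{every} edge in $E(\{x,y,z\},V\setminus\{x,y,z\})$ is coloured by one of the three triangle colours. Counting: such edges number $3(n-3)$, and together with the triangle itself we have accounted for all colours appearing on edges meeting $\{x,y,z\}$ — only $3$ of them. The remaining $n-3$ colours must therefore all appear only within $K^{V\setminus\{x,y,z\}}$, a complete graph on $n-3$ vertices.

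Now I iterate. Inside $K^{V\setminus\{x,y,z\}}$ we have a colouring using at least $n-3$ colours on $n-3$ vertices; if $n-3\geq 3$ this again forces a rainbow triangle (since $n-3=|V\setminus\{x,y,z\}|\geq AR(n-3,C_3)$ when $n-3\geq 3$), but now any pendant edge \emph{back to} $x$, $y$ or $z$ uses a colour outside the new triangle's colour set unless it is one of those three new colours — and here the earlier constraint helps, since edges from $x,y,z$ outward are coloured in $\{c(xy),c(yz),c(zx)\}$, which are colours \emph{not} used inside $K^{V\setminus\{x,y,z\}}$ at all. Hence such a pendant edge completes a multicoloured $Q$, contradiction. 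This handles $n\geq 6$; the small cases $n=4,5$ I would dispose of by direct inspection (for $n=4$, $K_4$ coloured by $4$ colours: a rainbow triangle exists and the fourth edge, being a pendant at a triangle vertex, must carry the fourth colour).

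The main obstacle I anticipate is making the "iterate into the smaller clique" step fully rigorous at the boundary — ensuring that when we pass to $K^{V\setminus\{x,y,z\}}$ the number of colours used strictly inside it is indeed at least its vertex count, and handling the small residual cases where this clique is too small to force another rainbow triangle. An alternative, possibly cleaner, route avoiding iteration: after fixing the rainbow triangle $x,y,z$ and the deduction that all of $E(\{x,y,z\},V\setminus\{x,y,z\})$ uses only the three triangle colours, pick any edge $uv$ inside $V\setminus\{x,y,z\}$ whose colour is a fourth colour (one exists, as $n\geq 4$ colours are used but only $3$ appear on/near the triangle); then $u$ is joined to $x,y,z$ by edges of triangle colours, so among $c(ux),c(uy),c(uz)$ two of the three triangle colours appear, say $c(ux),c(uy)\in\{c(xy),c(yz),c(zx)\}$ are distinct — then the triangle together with... one must check a short case analysis produces a multicoloured $Q$ using $uv$ as the pendant edge and an appropriate rainbow triangle among $\{x,y,z,u\}$. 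I would present whichever of these two arguments turns out shorter after checking the cases.
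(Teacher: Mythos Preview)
Your lower bound and the key structural observation --- that if no multicoloured $Q$ exists then every edge between the rainbow triangle $\{x,y,z\}$ and its complement must use one of the three triangle colours, forcing the remaining $n-3$ colours to live entirely inside $K^{V\setminus\{x,y,z\}}$ --- match the paper's proof. The gap is in how you finish from there.

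In Approach~1 you assert that the three original triangle colours ``are colours not used inside $K^{V\setminus\{x,y,z\}}$ at all''. This is unjustified and need not be true: you have shown that the $n-3$ \emph{other} colours appear only inside, but nothing prevents the three triangle colours from also appearing on interior edges. Consequently a rainbow triangle found in the smaller clique may share colours with $\{c(xy),c(yz),c(zx)\}$, and then a pendant edge back to $\{x,y,z\}$ (which necessarily carries one of the original colours) can repeat a colour of the new triangle, so you do not get a multicoloured $Q$ that way. Your Approach~2 runs into a similar issue: if, say, $c(ux)=c(uy)=c(uz)$ all equal the same triangle colour, then no triangle in $\{x,y,z,u\}$ through $u$ is rainbow, and the ``short case analysis'' you promise does not close easily.

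The paper closes the argument by recasting the descent correctly: the smaller clique $K^{V\setminus\{x,y,z\}}$ has $n-3$ vertices, is coloured with at least $n-3$ colours, and \emph{also contains no multicoloured $Q$} (any $Q$ there is a $Q$ in $K_n$). Taking a minimal counterexample (equivalently, inducting on $n$) this forces $n-3<4$, i.e.\ $n\le 6$. For $n\in\{4,5\}$ the inequality $\binom{n-3}{2}\ge n-3$ fails outright; for $n=6$ the interior is itself a triangle whose three edges must carry exactly the three \emph{new} colours (three new colours on only three edges), so now the second triangle genuinely is colour-disjoint from the first, and any cross-edge completes a multicoloured $Q$. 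That is the clean fix for the obstacle you anticipated.
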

\begin{proof}
The lower bound is established by colouring each edge $\{i,j\}$ of $K^{\{1,2,\ldots,n\}}$ by the colour $\min\{i,j\}$.

Assume, by contradiction, that $AR(n,Q)>n$ for some $n\geq 4$. Take minimal such $n$, and an edge-colouring $c$ of $K^V$, $|V|=n$, by at least $n$ colours with no multicoloured copy of $Q$. Since $AR(n,C_3)=n$ (\cite{ESS}, see \eqref{eq:C_3}), there is a multicoloured triangle $\Delta x_1x_2x_3$. Since there is no multicoloured copy of $Q$, $c(ux_i)\in\{c(x_1x_2),c(x_2x_3),c(x_3x_1)\}$ for any $u\in V-\{x_1,x_2,x_3\}$ and any $1\leq i\leq 3$. We get that $K^{V-\{x_1,x_2,x_3\}}$ is edge-coloured by at least $n-3$ colours with no multicoloured copy of $Q$. By the minimality of $n$ we conclude that $n-3<4$. Since the $\binom{n-3}{2}$ edges of $K^{V-\{x_1,x_2,x_3\}}$ are coloured by at least $n-3$ colours, we must have that $n=6$ and that the three edges of the triangle $K^{V-\{x_1,x_2,x_3\}}$ are coloured by $3$ distinct colours, all different than $c(x_1x_2),c(x_2x_3),c(x_3x_1)$. Adding any edge of $E(\{x_1,x_2,x_3\},V-\{x_1,x_2,x_3\})$ to the triangle $K^{V-\{x_1,x_2,x_3\}}$ we get a multicoloured copy of $Q$, and thus a contradiction.
\qedhere\end{proof}

\section{Small graphs for which $AR(n,G)=n+1$}

\begin{udefin}
Let $c_{\text{star}}$ be the edge-colouring of $K_n$ by $n$ colours, in which all edges incident with some chosen vertex are coloured by distinct colours, and all other edges are coloured by one additional colour.
\end{udefin}

\begin{propos}\label{prop:P_32P_2} $AR(n,P_3\cup 2P_2) = n+1$ for any $n\geq 7$.
\end{propos}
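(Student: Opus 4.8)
The plan is to follow the same two-part strategy used throughout this section: exhibit a colouring with $n$ colours and no multicoloured $P_3\cup 2P_2$ for the lower bound, and then argue inductively for the upper bound. For the lower bound, the colouring $c_{\text{star}}$ of $K_n$ is the natural candidate: a multicoloured copy of $P_3\cup 2P_2$ has $4$ edges, and in $c_{\text{star}}$ all but one colour class is a single edge at the central vertex $v$, while everything else is one colour; any subgraph avoiding $v$ uses at most one colour, so any multicoloured $4$-edge subgraph must use at least $3$ edges at $v$, which is impossible for $P_3\cup 2P_2$ since it has maximum degree $2$. Hence $AR(n,P_3\cup 2P_2)\geq n+1$.

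For the upper bound, I would take a colouring $c$ of $K_n$ by exactly $n+1$ colours (Remark \ref{exactly}) and assume for contradiction there is no multicoloured $P_3\cup 2P_2$. The key observation is that $P_3\cup 2P_2$ is just $P_3\cup P_2$ together with one more disjoint edge. By Proposition \ref{propos:P_3P_2}, since $n\geq 7>5$ and $c$ uses at least $3$ colours, there is a multicoloured copy of $P_3\cup P_2$, say on vertices $\{a,b,c\}$ (with path $a\,b\,c$) and $\{d,e\}$, using three distinct colours $\alpha=c(ab)$, $\beta=c(bc)$, $\gamma=c(de)$. Since $n\geq 7$, there remain at least two vertices outside $\{a,b,c,d,e\}$; any edge $f$ among the remaining $n-5$ vertices that is disjoint from all of $a,b,c,d,e$ and whose colour differs from $\alpha,\beta,\gamma$ would complete a multicoloured $P_3\cup 2P_2$. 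So all $\binom{n-5}{2}$ edges on $V\setminus\{a,b,c,d,e\}$ are coloured only by $\{\alpha,\beta,\gamma\}$; more strongly, running this over \emph{all} multicoloured copies of $P_3\cup P_2$ (which we may choose with some freedom), I expect to force that essentially only a bounded number of colours appear "away from a bounded set of vertices", contradicting the count of $n+1$ colours once $n$ is large enough.

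To make that rigorous, the cleaner route is induction on $n$, mirroring the proof of Proposition \ref{propos:Q}. Suppose $n$ is minimal with $AR(n,P_3\cup 2P_2)>n+1$ and let $c$ colour $K^V$ by at least $n+1$ colours with no multicoloured $P_3\cup 2P_2$. Take a multicoloured $P_3\cup P_2$ as above on a $5$-set $S=\{a,b,c,d,e\}$. For any edge $f$ disjoint from $S$, $c(f)\in\{\alpha,\beta,\gamma\}$; moreover, by swapping which disjoint $P_2$ we use (e.g. replacing $de$ by any edge disjoint from $\{a,b,c\}$ of a new colour) one sees the colouring restricted to $K^{V\setminus S}$ uses at most a constant number of colours — so $n+1-(\text{colours touching }S)\le C$, and since at most $2\binom{5}{2}$-ish colours can touch a fixed $5$-set this already gives a contradiction for $n$ past a small threshold. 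For the finitely many remaining small $n$ (down to $7$) I would handle them by the same local case analysis: fix a multicoloured $P_3\cup P_2$, note the few edges disjoint from $S$ are monochromatic-ish, and check directly that reassembling edges through $S$ produces a multicoloured $P_3\cup 2P_2$.

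The main obstacle I anticipate is the bookkeeping in showing that only boundedly many colours can "avoid" the $5$-set $S$: a single multicoloured $P_3\cup P_2$ only controls edges entirely disjoint from $S$, so one must cleverly vary the chosen copy (or combine a multicoloured $2P_2$ far from a given $P_3$) to pin down the colours on edges meeting $S$ as well. This is the step where a careful but routine case analysis on the colours $c(xa),c(xb),c(xc)$ for $x\notin S$ will be needed, exactly as in Propositions \ref{propos:Q} and \ref{propos:K_13P_2}.
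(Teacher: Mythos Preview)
Your lower bound via $c_{\text{star}}$ is fine and matches the paper. The upper bound, however, has a genuine gap at exactly the point you flag as ``the main obstacle''. Your counting step claims that ``at most $2\binom{5}{2}$-ish colours can touch a fixed $5$-set'', and from this you want a contradiction with $n+1$ colours. But that claim is false: the number of edges incident with a fixed $5$-set $S$ is $\binom{5}{2}+5(n-5)=5n-15$, which is linear in $n$, and those edges can easily carry $n-2$ distinct colours (indeed, in $c_{\text{star}}$ with centre in $S$, all $n-1$ star colours touch $S$). So after you show that $K^{V\setminus S}$ uses at most the three colours $\{\alpha,\beta,\gamma\}$, you have not bounded anything useful, and no contradiction follows. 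The suggested fix of ``varying the chosen copy'' of $P_3\cup P_2$ to control colours on edges meeting $S$ is not worked out, and it is not clear it can be made to work without essentially reproving the whole structure from scratch; the promised ``routine case analysis on $c(xa),c(xb),c(xc)$'' is where the real content of the proof would have to live, and it is absent.

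The paper takes a different and cleaner route that sidesteps this difficulty. Instead of starting from a multicoloured $P_3\cup P_2$, it uses the known value $AR(n,3P_2)=n+1$ to get a multicoloured matching $e_1,\dots,e_m$ with $m\geq 3$, chosen \emph{maximal} among rainbow matchings. Maximality forces every edge of a new colour to meet one of the $e_i$; unless $m=3$ and all such edges sit inside the $6$-vertex set $S$ of endpoints, one immediately assembles $P_3\cup 2P_2$. The residual case is then a finite analysis inside $S$ (with a special sub-case when $n=7$). The key point is that the maximality of the matching gives structural control on where the extra colours live --- control that your approach via a single $P_3\cup P_2$ does not provide.
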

\begin{proof}
The lower bound follows by using the colouring $c_{\text{star}}$ of $K_n$.

For the upper bound, let $c$ be any edge-colouring of $K_n$ by at least $n+1$ colours. Take a set of maximal size of disjoint edges $\{e_i\}_{i=1}^m$ with distinct colours. Since $AR(n,3P_2)=n+1$ (\cite{FKSS}, see \eqref{eq:tP_2}), we have that $m\geq 3$. Let $B$ be the set of edges whose colour is not in $\{c(e_i)\}_{i=1}^m$. By the maximality of $m$, any $e\in B$ must have an endpoint in common with at least one of the edges $\{e_i\}_{i=1}^m$. We can now clearly get a multicoloured copy of $P_3\cup 2P_2$, unless $m=3$ and every $e\in B$ has both endpoints in the set $S$ of endpoints of the edges $e_1,e_2,e_3$. In this case, form a graph $G$ by taking a single edge of each colour not in $\{c(e_1),c(e_2),c(e_3)\}$. If $G$ contains a copy $H$ of $P_3\cup P_2$, we get a multicoloured copy of $P_3\cup 2P_2$ by adding to $H$ any edge $e$ of $K_n$ disjoint to $H$ (at least one of the endpoints of $e$ is not in $S$, so $e\notin B$, i.e., $c(e)\in\{c(e_1),c(e_2),c(e_3)\}$). If $G$ does not contain a copy of $P_3\cup P_2$, then a simple case analysis shows that $G$ has only four vertices and five edges (so necessarily $n=7$). For any $1\leq i\leq 3$, let $x_i,y_i$ be the endpoints of $e_i$, then with no loss of generality, the edges of $G$ are $x_1y_2,x_1y_3,y_1y_2,y_1y_3$ and $y_2y_3$. Let $u$ be the only remaining vertex. If either $c(ux_2)\neq c(x_3y_3)$ or $c(ux_3)\neq c(x_2y_2)$ then either $(x_1y_2y_1)\cup(ux_2)\cup(x_3y_3)$ or $(x_1y_3y_1)\cup(ux_3)\cup(x_2y_2)$  is a multicoloured copy of $P_3\cup 2P_2$, and if $c(ux_2)=c(x_3y_3)$ and $c(ux_3)=c(x_2y_2)$ then $(x_2ux_3)\cup(x_1y_1)\cup(y_2y_3)$ is a multicoloured copy of $P_3\cup 2P_2$.\end{proof}

\begin{propos}\label{propos:C_3P_2} $AR(n,C_3 \cup P_2)=\max\{n+1,7\}$ for any $n\geq 5$.
\end{propos}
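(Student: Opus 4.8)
The plan is to prove the two inequalities separately, splitting the lower bound according to the value of $\max\{n+1,7\}$. For $n\ge 6$, where $\max\{n+1,7\}=n+1$, I would use the colouring $c_{\text{star}}$ of $K_n$: it uses $n$ colours, and it has no rainbow $C_3\cup P_2$, because any triangle avoiding the distinguished vertex $v$ is monochromatic, so a rainbow triangle must contain $v$ and hence its unique $v$-free edge carries the single off-star colour, which is also the colour of every edge disjoint from that triangle. Thus $AR(n,C_3\cup P_2)\ge n+1$. For $n=5$, where the target is $7$, I would instead exhibit an explicit $6$-colouring of $K_5$: colour the four edges at a vertex $v$ with four distinct colours, and colour the six edges of the remaining $K_4$ with two new colours so that in each of the three perfect matchings of this $K_4$ the two edges receive the same colour. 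Then no triangle inside the $K_4$ is rainbow, since it contains one edge from each of the three matchings and two matchings share a colour; hence every rainbow triangle passes through $v$, and the edge disjoint from it is the matching-partner of its $v$-free edge, so it repeats one of the triangle's colours. This gives $AR(5,C_3\cup P_2)\ge 7$.

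For the upper bound I would assume a colouring $c$ of $K^V$ with at least $\max\{n+1,7\}$ colours and no rainbow $C_3\cup P_2$, and derive a contradiction. Since $AR(n,C_3)=n$ (see \eqref{eq:C_3}), there is a rainbow triangle $\Delta x_1x_2x_3$; write $A=\{c(x_1x_2),c(x_2x_3),c(x_3x_1)\}$ and $V'=V-\{x_1,x_2,x_3\}$. As $n\ge 5$, $K^{V'}$ contains an edge, and adjoining any edge of $K^{V'}$ to the triangle would create a rainbow $C_3\cup P_2$ unless its colour lies in $A$; hence \emph{every} edge of $K^{V'}$ is coloured from $A$. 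The engine of the argument is the same principle applied repeatedly: whenever $\Delta pqr$ is a rainbow triangle, every edge contained in $V-\{p,q,r\}$ must be coloured from the $3$-element colour set of $\Delta pqr$. I would first remove low-degree vertices: if $d_c(v)=1$ then $K^{V-v}$ still has at least $\max\{n,6\}$ colours, which is $\ge AR(n-1,C_3\cup P_2)$ once $n\ge 7$, so induction on $n$ finishes; thus for $n\ge 7$ we may assume $d_c(v)\ge 2$ for all $v$.

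The remaining, and main, work is a case analysis of the colours on the cross edges $E(\{x_1,x_2,x_3\},V')$, which are the only edges that can carry a colour outside $A$. The mechanism above forces a great deal of rigidity: for instance, if one vertex $x_i$ sends two distinct non-$A$ colours $b_1\ne b_2$ into $V'$, via edges $x_iu$ and $x_iw$, then $\Delta x_iuw$ is rainbow, and since $V-\{x_i,u,w\}$ still contains the edge $x_jx_k$ (coloured by $a_i\notin\{b_1,b_2\}$) one gets $c(uw)=a_i$; one then pins down that $x_j$ and $x_k$ emit only the colour $a_i$ into $V'$, and that all of $K^{V'-\{u,w\}}$ is monochromatic in $a_i$. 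Carrying this bookkeeping through the various patterns of which $x_i$ emits which colours into $V'$ should collapse essentially all of $K_n$ onto a bounded colour set together with the at most $n-3$ distinct colours that a single $x_i$ can contribute, giving at most $\max\{n,6\}$ colours in total, contradicting the hypothesis. The two small cases $n=5,6$, where the target value is the constant $7$ and the degree reduction does not apply, would be handled by a direct, short but somewhat tedious, check.

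I expect the cross-edge case analysis to be the main obstacle. The difficulty is not any single deduction but organising the colour patterns on $E(\{x_1,x_2,x_3\},V')$ so that the ``rainbow triangle forces its complement'' principle can be applied often enough to push the colour count below $n+1$ for \emph{every} $n\ge 7$ — not merely for large $n$, where a crude count already suffices — and separately clearing the constant-valued cases $n\in\{5,6\}$.
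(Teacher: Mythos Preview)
Your lower bound is correct and matches the paper's: $c_{\text{star}}$ shows $AR(n,C_3\cup P_2)>n$ for every $n\ge 5$, and for $n=5$ the explicit $6$-colouring you describe (one matching of the residual $K_4$ in one new colour, the other four edges in a second new colour) is exactly the paper's construction.

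For the upper bound your route is genuinely different from the paper's, and heavier. You fix a rainbow triangle $\Delta x_1x_2x_3$, force $K^{V'}$ into the three triangle colours, set up an induction on $n$ by deleting vertices with $d_c(v)=1$, and then propose a cross-edge case analysis that you yourself flag as the main obstacle, together with separate ad hoc checks for $n\in\{5,6\}$. The paper avoids all of this with one clean dichotomy on $\bigl|\bigcup_{i=1}^3 C(x_i)\bigr|$. If this union has at most $n$ colours, then (since at least $n+1$ colours are present) some edge $e$ carries a colour lying in no $C(x_i)$; such an $e$ is automatically disjoint from the triangle, and $\Delta x_1x_2x_3\cup e$ is the desired copy. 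If the union has at least $n+1$ colours, a pigeonhole over $V'$ yields a vertex $x_4$ with $|\{c(x_ix_j)\}_{1\le i<j\le 4}|\ge 5$, and a short analysis of this $K_4$ (using only $r\ge 7$) finishes. This argument is uniform over all $n\ge 5$: no induction, no separate small cases, and no $d_c(v)\ge 2$ hypothesis. What the paper's split buys is precisely the elimination of your cross-edge bookkeeping; your approach, if the case analysis were completed, would give a proof driven solely by the ``rainbow triangle constrains its complement'' principle, but at the cost of the tedium you anticipate and of the unproved base cases $n=5,6$.
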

\begin{proof}
{\bf Lower bound:} To show that $AR(n,C_3 \cup P_2)>n$ we use the colouring $c_{\text{star}}$ of $K_n$. To show that $AR(5,C_3 \cup P_2)>6$, let $u,x_1,x_2,y_1,y_2$ be the vertices of $K_5$; Colour each of the four edges $ux_1,ux_2,uy_1,uy_2$ by distinct colours, the edges $x_1x_2,y_1y_2$ by a fifth colour, and all other edges by a sixth colour.

{\bf Upper bound:} Let $c$ be any edge-colouring of $K_n$ by at least $r:=\max\{n+1,7\}$ colours. Since $AR(n,C_3)=n$ (\cite{ESS}, see \eqref{eq:C_3}), there is a multicoloured triangle $\Delta x_1x_2x_3$. 

If $|\bigcup_{i=1}^3 C(x_i)|\leq n$, then there is at least one edge $e$ such that $c(e)\notin \bigcup_{i=1}^3 C(x_i)$. In particular, $e$ is disjoint to the triangle $\Delta x_1x_2x_3$ and $c(e)\notin\{c(x_1x_2),c(x_2x_3),c(x_3x_1)\}$, so $\Delta x_1x_2x_3\cup e$ is a multicoloured copy of $C_3 \cup P_2$.  
If $|\bigcup_{i=1}^3 C(x_i)|\geq n+1$ then there must be some vertex $x_4\notin\{x_1,x_2,x_3\}$ such that $|\{c(x_ix_j)\}_{1\leq i<j\leq 4}|\geq 5$. 

If $|\{c(x_ix_j)\}_{1\leq i<j\leq 4}|=6$, then since $r\geq 7$ there must be at least one edge $e$ such that $c(e)\notin\{c(x_ix_j)\}_{1\leq i<j\leq 4}$. At most one of $x_1,x_2,x_3,x_4$, say $x_1$, is an endpoint of  $e$ and then $\Delta x_2x_3x_4\cup e$ is a multicoloured copy of $C_3 \cup P_2$.

If $|\{c(x_ix_j)\}_{1\leq i<j\leq 4}|=5$, then at most one of the triangles $\Delta x_2x_3x_4, \Delta x_1x_3x_4, \Delta x_1x_2x_4, \Delta x_1x_2x_3$, say $\Delta x_2x_3x_4$, is not multicoloured. We now consider two cases.
\medskip\newline
{\bf Case 1.} There is an edge $e$ not incident with $x_1$ such that $c(e)\notin\{c(x_ix_j)\}_{1\leq i<j\leq 4}$. 
\medskip\newline
At least one of the multicoloured triangles $\Delta x_1x_3x_4, \Delta x_1x_2x_4, \Delta x_1x_2x_3$ is disjoint to $e$ and then this triangle with the edge $e$ form a multicoloured copy of $C_3 \cup P_2$.
\medskip\newline
{\bf Case 2.} $x_1$ is incident with any edge $e$ such that $c(e)\notin\{c(x_ix_j)\}_{1\leq i<j\leq 4}$.
\medskip\newline
Since $r\geq 7$ there must be at least two such edges $x_1y_1,x_1y_2$ with distinct colours.
If $c(y_1y_2)\in\{c(x_1y_1),c(x_1y_2)\}$ then $\Delta x_1x_2x_3\cup y_1y_2$ is a multicoloured copy of $C_3 \cup P_2$. If $c(y_1y_2)\notin\{c(x_1y_1),c(x_1y_2)\}$, then the colour of at least one of the three edges $x_2x_3,x_3x_4,x_4x_2$, say $x_2x_3$ is not $c(y_1y_2)$, and then $\Delta x_1y_1y_2\cup x_2x_3$ is a multicoloured copy of $C_3 \cup P_2$.
\end{proof}

In the proofs below we use the following notion.

\begin{udefin} A $w$-colour, for a vertex $w$, is a colour that only appears on edges incident with $w$.
\end{udefin}

\begin{propos}\label{prop:P_4P_2} $AR(n,P_4 \cup P_2) = n+1$ for any $n\geq 6$.
\end{propos}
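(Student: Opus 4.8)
The lower bound $AR(n,P_4\cup P_2)\ge n+1$ will come for free from monotonicity of the anti-Ramsey number, since $3P_2\subseteq P_4\cup P_2$ (a $2$-matching inside the $P_4$ together with the extra $P_2$) and $AR(n,3P_2)=n+1$ for all $n\ge 6$ by the $3P_2$ row of Table \ref{table1}. Alternatively one checks directly that the colouring $c_{\text{star}}$ has no multicoloured $P_4\cup P_2$: since $P_4\cup P_2$ has maximum degree $2$, at most two of its four edges can be incident with the centre of $c_{\text{star}}$, so they carry at most three colours.

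For the upper bound I would argue by contradiction: assume $c$ colours $K_n$ ($n\ge 6$) with at least $n+1$ colours and has no multicoloured $P_4\cup P_2$. Following the template of Proposition \ref{prop:P_32P_2}, take a maximum family $\{e_i\}_{i=1}^m$ of pairwise disjoint, pairwise distinctly coloured edges; since $3P_2$ has three edges and $AR(n,3P_2)=n+1$, we have $m\ge 3$. Write $e_i=a_ib_i$, set $S=\bigcup_i\{a_i,b_i\}$, and call a colour \emph{new} if it is none of $c(e_1),\dots,c(e_m)$. By maximality of the family every new-coloured edge meets $S$, so in particular every edge of $K^{V\setminus S}$ carries an old colour.

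Next I would split on where the new colours lie. If some edge $f$ between $\{a_i,b_i\}$ and $\{a_j,b_j\}$ ($i\ne j$) is new-coloured, then (taking $f=b_ia_j$) the path $a_ib_ia_jb_j$ is a multicoloured $P_4$, and for any $k\notin\{i,j\}$ the disjoint edge $e_k$, whose colour avoids $c(e_i),c(e_j)$ and the new colour $c(f)$, completes a multicoloured $P_4\cup P_2$ — a contradiction. For $n=6$ this already finishes the proof, because then $S=V$, so every edge is an $e_i$ or such a joining edge, and if no joining edge were new-coloured then $c$ would use only three colours, impossible for $n+1=7$. So for $n\ge 7$ I may assume no joining edge is new-coloured; then all new colours sit on $S$-to-$(V\setminus S)$ edges, there are at least $n+1-m$ of them, whence $V\setminus S\ne\emptyset$, and a pigeonhole count (there are only $n-2m<n+1-m$ vertices outside $S$) gives a vertex $w\in V\setminus S$ joined to two distinct vertices $s_1,s_2\in S$ by edges of distinct new colours $\alpha_1,\alpha_2$.

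Finally I would convert $w,s_1,s_2$ into the desired copy. If $s_1\in V(e_p)$ and $s_2\in V(e_q)$ with $p\ne q$: with $s_1'$ the other endpoint of $e_p$ and any $r\notin\{p,q\}$, the path $s_1's_1ws_2$ (colours $c(e_p),\alpha_1,\alpha_2$) together with $e_r$ is a multicoloured $P_4\cup P_2$. If $\{s_1,s_2\}=V(e_p)$: choosing $q'\ne p$, a vertex $z\in V(e_{q'})$, and $r'\notin\{p,q'\}$, the path $zs_1ws_2$ (colours $c(zs_1),\alpha_1,\alpha_2$, which are distinct because $zs_1$ is a joining edge and hence old-coloured) together with $e_{r'}$ works \emph{provided} $c(e_{r'})\ne c(zs_1)$. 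I expect this proviso to be the main obstacle: one must exclude the stubborn configuration in which, for every admissible choice of $q'$ and $z$, the joining edge $zs_1$ (or $zs_2$) carries exactly the leftover old colour $c(e_{r'})$. I would dispatch this residual case by a finite case analysis in the spirit of Proposition \ref{prop:P_32P_2}, bringing in the new-coloured edges out of $w$ into $V(e_{q'})\cup V(e_{r'})$ and the notion of a $w$-colour, plus perhaps a direct check of a couple of small values of $n$; in each branch this should either produce a multicoloured $P_4\cup P_2$, or pin the colouring down to one using fewer than $n+1$ colours, or contradict the maximality of $\{e_i\}_{i=1}^m$.
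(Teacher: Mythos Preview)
Your lower bound and the $n=6$ step are correct and essentially match the paper. For $n\ge 7$ the paper does something quite different from your maximal-matching argument: it inducts on $n$. If some vertex $v$ carries at most one $v$-colour, deleting $v$ drops at most one colour and the induction hypothesis finishes. Otherwise every vertex has at least two private colours; choose $u$ with $u$-colours $c(ux_1),c(ux_2)$, any $y\notin\{u,x_1,x_2\}$, and $z\neq u$ with $c(yz)$ a $y$-colour. If $z\notin\{x_1,x_2\}$ then $(x_1ux_2w)\cup(yz)$ is multicoloured for any sixth vertex $w$; if $z=x_1$, say, then $(yx_1ux_2)\cup(v_1v_2)$ works for any two further vertices $v_1,v_2$. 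That is the entire inductive step.

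Your route can also be completed, but the closing sketch is misdirected. The obstruction $\{s_1,s_2\}=V(e_p)$ is only an issue when $m=3$ (for $m\ge 4$ one can always pick $r'\notin\{p,q',k\}$ where $c(zs_1)=c(e_k)$), yet $m=3$ can occur for every $n\ge 7$, so ``a direct check of a couple of small values of $n$'' will not close the gap. What does close it, uniformly in $n$, is this: if the proviso fails for every admissible choice then all $V(e_p)$--$V(e_q)$ edges carry colour $c(e_r)$ and all $V(e_p)$--$V(e_r)$ edges carry colour $c(e_q)$. Writing $V(e_q)=\{a_q,b_q\}$ and $V(e_r)=\{a_r,b_r\}$, the path $(a_q\,s_1\,w\,s_2)$ has colours $c(e_r),\alpha_1,\alpha_2$; if either of the (old-coloured) joining edges $b_qa_r$, $b_qb_r$ has colour $\neq c(e_r)$, it serves as the disjoint $P_2$. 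If both equal $c(e_r)$, switch to the path $(a_r\,s_1\,w\,s_2)$ with colours $c(e_q),\alpha_1,\alpha_2$ and use $b_qb_r$ (colour $c(e_r)\neq c(e_q)$) as the $P_2$. So the matching approach does go through, but the paper's $v$-colour induction is considerably shorter.
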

\begin{proof}
The lower bound follows by using the colouring $c_{\text{star}}$ of $K_n$.

The upper bound is proved by induction on $n$. For the base case $n=6$, consider any edge-colouring of $K_6$ by at least $7$ colours. Since $AR(6,3P_2)=7$ (\cite{HY}, see \eqref{eq:perfect}), there is a multicoloured copy of $3P_2$. This copy together with any edge coloured in any of the remaining colours form a multicoloured copy of $P_4\cup P_2$.

Now let $n\geq 7$, assume that $AR(n-1,P_4\cup P_2)=n$, and consider any edge-colouring $c$ of $K_n$ by at least $n+1$ colours. If there exists a vertex $v$ having less than two $v$-colours, then removing $v$ along with its incident edges from the graph, at most one colour disappears from the graph, and the claim follows by the induction hypothesis. Therefore we assume that every vertex $v$ in the graph has at least two $v$-colours. 

Let $u,x_1, x_2$ be vertices such that $c(ux_1), c(ux_2)$ are two distinct $u$-colours, and let $y\notin\{u,x_1,x_2\}$ be some other vertex. Since there are at least two $y$-colours, there is some $z\neq u$ such that $c(yz)$ is a $y$-colour.
If $z\notin\{x_1,x_2\}$, let $w\notin\{u,x_1,x_2,y,z\}$ be another vertex, then $(x_1ux_2w)\cup(yz)$ is a multicoloured copy of $P_4\cup P_2$.
If $z\in\{x_1,x_2\}$, say $z=x_1$, let $v_1,v_2\notin\{u,x_1,x_2,y\}$ be two other vertices, then $(yx_1ux_2)\cup(v_1v_2)$ is a multicoloured copy of $P_4\cup P_2$.
\end{proof}

\begin{propos}\label{propos:P_5} $AR(n,P_5 ) = n+1$ for any $n\geq 5$.
\end{propos}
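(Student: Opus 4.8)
This follows from the colouring $c_{\text{star}}$ of $K_n$, exactly as for $P_4\cup P_2$: a copy of $P_5$ has four edges, and in $c_{\text{star}}$ any copy of $P_5$ uses at least two edges not incident with the centre (whether the centre is absent, an endpoint, or an interior vertex), and those share a colour. Hence $AR(n,P_5)\geq n+1$.

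\textbf{Upper bound, set-up.} I would prove the upper bound by induction on $n$, in the spirit of the proof of Proposition \ref{prop:P_4P_2}. The base case $n=5$ — that any edge-colouring of $K_5$ by at least $6$ colours contains a multicoloured $P_5$ — I would settle by a direct (finite, if somewhat tedious) inspection: since $AR(5,C_4)=6$ by \eqref{eq:C_4} there is a multicoloured $C_4$, and a short analysis shows that one of its length-$3$ arcs, possibly after using a diagonal of the $C_4$ or swapping to a vertex seeing the fifth vertex in a suitable colour, extends to a multicoloured $P_5$ on all five vertices. For the inductive step let $n\geq 6$, assume $AR(n-1,P_5)=n$, and let $c$ be an edge-colouring of $K^V$, $|V|=n$, by at least $n+1$ colours. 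If some vertex $v$ has at most one $v$-colour, delete it: at most one colour disappears, so $K^{V-\{v\}}$ is coloured by at least $n$ colours, and the induction hypothesis gives a multicoloured $P_5$. So we may assume every vertex has at least two $v$-colours. Fix a vertex $u$ and vertices $x_1\neq x_2$ so that $\sigma_1:=c(ux_1)$ and $\sigma_2:=c(ux_2)$ are distinct $u$-colours, and put $W:=V-\{u,x_1,x_2\}$, so $|W|=n-3\geq 3$. The observation used throughout is that for $p,q\in V-\{u\}$ the colours $c(x_1p)$ and $c(x_2q)$ are automatically distinct from $\sigma_1$ and $\sigma_2$ (those edges miss $u$), so a path through the ``bend'' $x_1-u-x_2$ is multicoloured as soon as a handful of genuine colour inequalities hold.

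\textbf{The case analysis.} If some $w\in W$ has $c(x_1w)$ an $x_1$-colour (or symmetrically with $x_2$), then $w-x_1-u-x_2$ is a multicoloured $P_4$, and it extends to a multicoloured $P_5$ by appending a vertex $v\in W-\{w\}$ at $x_2$ (if $c(x_2v)\neq c(x_1w)$ for some such $v$) or at $w$ (if $c(wv)\neq c(x_1w)$ for some such $v$); if both fail then every $v\in W-\{w\}$ has $c(x_2v)=c(wv)=c(x_1w)$, and since $|W-\{w\}|=n-4\geq 2$ an edge inside $W-\{w\}\cup\{w\}$... more precisely $c(wv_1)=c(x_1w)$ for some $v_1\neq w$ in $W$, contradicting that $c(x_1w)$ is an $x_1$-colour. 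Otherwise, every $x_1$-colour lies on $ux_1$ or $x_1x_2$, so $\sigma_1$ and $\mu:=c(x_1x_2)$ are two distinct $x_1$-colours, and likewise $\sigma_2,\mu$ are $x_2$-colours; hence $\sigma_1$ appears only on $ux_1$, $\sigma_2$ only on $ux_2$, $\mu$ only on $x_1x_2$, and $\sigma_1,\sigma_2,\mu$ are pairwise distinct. Now try to extend $x_1-u-x_2$ to a multicoloured $P_5$ of the form $p-x_1-u-x_2-q$ with distinct $p,q\in W$; this succeeds unless $c(x_1p)=c(x_2q)$ for all distinct $p,q\in W$, which — using $|W|\geq 3$ — forces all edges of $E(\{x_1,x_2\},W)$ to carry one common colour $\alpha\notin\{\sigma_1,\sigma_2,\mu\}$. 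If some edge $pr$ with $p,r\in W$ has $c(pr)\neq\alpha$, then $r-p-x_1-u-x_2$ has the four distinct colours $c(pr),\alpha,\sigma_1,\sigma_2$. Otherwise all edges inside $W$ are coloured $\alpha$, so the only edges whose colour is not forced into $\{\sigma_1,\sigma_2,\mu,\alpha\}$ are the $n-3$ edges $\{uw:w\in W\}$; since there are at least $n+1=4+(n-3)$ colours, these edges receive $n-3$ distinct colours all avoiding $\{\sigma_1,\sigma_2,\mu,\alpha\}$, and then for distinct $w,w'\in W$ the path $w-u-x_1-x_2-w'$ has the four distinct colours $c(uw),\sigma_1,\mu,\alpha$. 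In every case a multicoloured $P_5$ appears, completing the induction.

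\textbf{The hard part.} I expect the main obstacle to be twofold. First, the base case $n=5$: although elementary, it seems to require a genuine (if short) case analysis, since the clean rainbow-subgraph bound only gives $AR(n,P_5)\leq\lfloor\tfrac32 n\rfloor+1$ and so is far from tight, and the inductive deletion step is unavailable when removing a vertex leaves $K_4$, which contains no $P_5$. Second, verifying that the inductive case analysis above is genuinely exhaustive — in particular the step deducing that $E(\{x_1,x_2\},W)$ is monochromatic from the failure of the $p-x_1-u-x_2-q$ extensions, and the counting step that pins the colours of the edges $uw$ down to $n-3$ fresh colours.
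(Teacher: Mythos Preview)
Your inductive framework is the same as the paper's, and your lower bound and the reduction ``every vertex has at least two $v$-colours'' match exactly. However, there are two points worth flagging.

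\textbf{The base case is not actually proved.} You defer $n=5$ to ``a short analysis'' starting from a multicoloured $C_4$, but never carry it out; this is the one place where the induction cannot save you, so it needs a complete argument. The paper handles $n=5$ cleanly by instead invoking $AR(5,C_3)=5$ to get a multicoloured triangle $\Delta x_1x_2x_3$, letting $y_1,y_2$ be the remaining two vertices, and splitting on whether $c(y_1y_2)$ lies in $\{c(x_1x_2),c(x_2x_3),c(x_3x_1)\}$: if not, a sixth-coloured edge $x_iy_j$ extends two sides of the triangle to a $P_5$; if so (say $c(y_1y_2)=c(x_1x_2)$), one of the three extra-coloured edges must lie in $E(\{x_1,x_2\},\{y_1,y_2\})$ and again extends to a $P_5$. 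This is short and would replace your sketch.

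\textbf{The inductive step works but is over-engineered.} In your Case~1 the ``both fail'' branch is vacuous: since $c(x_1w)$ is an $x_1$-colour and $wv$ is not incident with $x_1$, you always have $c(wv)\neq c(x_1w)$, so the extension at $w$ succeeds immediately. More substantially, your entire Case~2 (deducing $\sigma_1,\sigma_2,\mu$ are singleton colours, forcing $E(\{x_1,x_2\},W)$ monochromatic, then counting) is unnecessary. The paper avoids it by looking at $y$-colours for some $y\in W$ rather than $x_i$-colours: either a $y$-colour sits on $yz$ with $z\in W\setminus\{y\}$, and then $(x_1ux_2zy)$ is multicoloured; or all $y$-colours lie on $yu,yx_1,yx_2$, so (there being at least two) one of $c(yx_1),c(yx_2)$---say $c(yx_2)$---is a $y$-colour, and then $(zx_1ux_2y)$ is multicoloured for any $z\in W\setminus\{y\}$. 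This two-line dichotomy replaces your multi-page case split, and is the main simplification you are missing.
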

\begin{proof}
The lower bound follows by using the colouring $c_{\text{star}}$ of $K_n$.

The upper bound is proved by induction on $n$. 
For the base case $n=5$, let $c$ be any edge-colouring of $K_5$ by at least $6$ colours.  Since $AR(n,C_3)=n$ (\cite{ESS}, see \eqref{eq:C_3}), there is a multicoloured triangle $\Delta x_1x_2x_3$. Let $y_1,y_2$ be the remaining two vertices. 
If $c(y_1y_2)\notin\{c(x_1x_2),c(x_2x_3),c(x_3x_1)\}$, let $x_iy_j$, $1\leq i\leq 3$, $1\leq j\leq 2$ be an edge such that $c(x_iy_j)\notin\{c(x_1x_2),c(x_2x_3),c(x_3x_1),c(y_1y_2)\}$. With no loss of generality assume that $i=3$ and $j=1$, then $(x_1x_2x_3y_1y_2)$ is a multicoloured copy of $P_5$. If $c(y_1y_2)\in\{c(x_1x_2),c(x_2x_3),c(x_3x_1)\}$, say $c(y_1y_2)=c(x_1x_2)$, then since there are at least three edges whose colours are not in $\{c(x_1x_2),c(x_2x_3),c(x_3x_1)\}$, one of those edges must be in $E(\{x_1,x_2\},\{y_1,y_2\})$, say $x_1y_2$, and then $(y_1y_2x_1x_2x_3)$ is a multicoloured copy of $P_5$

Now let $n\geq 6$, assume that $AR(n-1,P_5)\leq n$, and consider any edge-colouring $c$ of $K_n$ by at least $n+1$ colours.
If there is a vertex $v$ having less than two $v$-colours, then removing $v$ along with its incident edges from the graph, at most one colour disappears from the graph, and the claim follows by the induction hypothesis. Therefore we assume that every vertex $v$ has at least two $v$-colours.

Let $u,x_1, x_2$ be vertices such that $c(ux_1), c(ux_2)$ are two distinct $u$-colours, and let
$y\notin\{u,x_1,x_2\}$ be some other vertex. If there is a vertex $z\notin\{u,x_1,x_2,y\}$ such that $c(yz)$ is a $y$-colour, then $(x_1ux_2zy)$ is a multicoloured copy of $P_5$.
Otherwise, all the edges having a $y$-colour are among the edges $yu,yx_1,yx_2$, so at least one of $c(yx_1), c(yx_2)$, say $c(yx_2)$, is a $y$-colour. Let $z\notin\{u,x_1,x_2,y\}$ be some other vertex, then $(zx_1ux_2y)$ is a multicoloured copy of $P_5$. 
\qedhere\end{proof}

For the proof of our last result, Proposition \ref{propos:2P_3} below, we need the following lemma.

\begin{lem}\label{lem:2P_3} $AR(7,2P_3 )\leq 8$ 
\end{lem}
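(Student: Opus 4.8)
The plan is to prove $AR(7,2P_3)\le 8$ by showing that any edge-colouring $c$ of $K_7$ using at least $8$ colours contains a multicoloured copy of $2P_3$, i.e.\ four distinct vertices $a,b,c,d$ with two more distinct vertices $e,f$ (all six distinct, since $2P_3$ has six vertices) such that the four edges $ab,bc$ and $de,ef$ receive four distinct colours. Note $K_7$ has $\binom{7}{2}=21$ edges, so at least $8$ colours is a genuine constraint; also a $2P_3$ uses exactly six of the seven vertices, leaving one vertex free, which gives useful slack. I would first invoke the structural results already available: since $8>7=AR(7,C_3)$ there is a multicoloured triangle $\Delta x_1x_2x_3$, and since $8>AR(7,P_5)$ would be false (here $AR(7,P_5)=8$ by Proposition~\ref{propos:P_5}), I cannot directly get a $P_5$; however $8\ge AR(7,3P_2)=7$ (\cite{FKSS}, see \eqref{eq:tP_2} and \eqref{eq:perfect}) does give a multicoloured matching $e_1,e_2,e_3$ of size three with three distinct colours. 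The strategy is to leverage this multicoloured $3P_2$ together with a fourth edge.

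The main line of argument: take a multicoloured $3P_2$, say $e_i=a_ib_i$ for $i=1,2,3$, using colours $1,2,3$; these six vertices are all of $K_7$ except one vertex $v$. To build a $2P_3$ I need to merge two of these matching edges into a $P_3$ via a connecting edge whose colour is new (one of colours $4,\dots$), while keeping the third $e_i$ as the second $P_3$ — but that only gives a $P_3\cup P_2$, not $2P_3$. Instead I should aim for two connecting edges: pick a new-coloured edge joining (an endpoint of) $e_1$ to (an endpoint of) $e_2$ and another new-coloured edge joining (an endpoint of) $e_3$ to $v$; the first yields a $P_3$ on $\le 4$ of the $a_i,b_i$ and the second yields a $P_3$ on $e_3\cup\{v\}$, and if the two connecting colours and $\{1,2,3\}$ supply four distinct colours among the relevant four edges we are done. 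So the real task is to show that, unless a multicoloured $2P_3$ already exists for trivial reasons, the edges among $\{a_i,b_i,v\}$ that are \emph{not} coloured $1,2,3$ are too constrained to avoid this — concretely, I would argue that if no multicoloured $2P_3$ exists then every edge incident to $v$ is coloured with one of $1,2,3$ and most edges inside $\{a_i,b_i\}$ are too, forcing the total number of colours down to at most $7$, a contradiction with the hypothesis of $8$ colours. This is the same "count the surviving colours" philosophy used in Proposition~\ref{prop:P_32P_2} and Proposition~\ref{propos:C_3P_2}.

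Carrying this out, I would organise the case analysis by how the "extra" colours (those outside $\{c(e_1),c(e_2),c(e_3)\}$) are distributed. Each extra-coloured edge $f$, by maximality of the matching $\{e_1,e_2,e_3\}$ (choose it of maximum size with distinct colours, as in Proposition~\ref{prop:P_32P_2}), meets at least one $e_i$; I then track whether $f$ touches $v$, touches one $e_i$, or touches two of the $e_i$. In the generic situation $f$ has one endpoint among $\{a_i,b_i\}$ and the other at $v$, say $f=a_1v$ with a new colour $\alpha$: then $(a_1v)$ extended — I instead take the $P_3$ $(b_2 a_2 ? )$... more carefully, take $P_3=(v a_1 b_1)$? no, $va_1$ and $a_1b_1$ give a $P_3$ with colours $\alpha,1$, and then $e_2,e_3$ with colours $2,3$ can't both be used (that's $3$ edges not $2$). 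So the cleanest target is: one $P_3$ = two of the $e_i$ joined through a new edge, second $P_3$ = the third $e_i$ together with $v$ joined through a new edge; I need two new-coloured edges, one of type "between $\{a_1,b_1\}$ and $\{a_2,b_2\}$" and one of type "between $\{a_3,b_3\}$ and $v$", with all four edge-colours distinct. I expect that the main obstacle is precisely ruling out the degenerate configurations — when all extra colours cram onto edges within one small vertex set, or when the extra-coloured edges all share a common vertex — and verifying that in each such configuration the colour count is forced to be $\le 7$, or that a differently-shaped $2P_3$ (using the free vertex $v$ as a central vertex of a $P_3$) can still be extracted. I would finish by enumerating these few degenerate shapes (there should be only a handful, governed by small numbers since $n=7$) and in each either exhibiting the $2P_3$ directly or producing the colour-count contradiction, exactly in the style of the tedious-but-finite analyses already appearing in Propositions~\ref{propos:K_13P_2}, \ref{prop:P_32P_2} and \ref{propos:C_3P_2}.
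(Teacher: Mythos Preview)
Your outline is a plausible starting point but it is not yet a proof: the promised ``enumerating these few degenerate shapes'' is never carried out, and the paragraphs before it are largely exploratory (you record several false starts yourself). Until that case analysis is actually written down there is nothing to check. One minor slip along the way: $AR(7,3P_2)=8$, not $7$ (apply \eqref{eq:tP_2} with $t=3$, $n=7$); this does not break your plan since you have $\geq 8$ colours, but it should be corrected. Also, your appeal to ``maximality of the matching'' buys nothing here: in $K_7$ any matching has size at most $3$, and every edge automatically meets the six matched vertices, so no constraint on the extra-coloured edges is obtained.

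More substantively, the paper's proof is organised quite differently and more efficiently. It first peels off the case where some vertex $u$ has $d_c(u)=6$, which is dispatched in two lines. In the remaining case (so $d_c(u)<6$ for all $u$) it invokes Proposition~\ref{propos:C_3P_2}, not the $3P_2$ result, to obtain a multicoloured triangle $\Delta x_1x_2x_3$ together with a disjoint edge of a fourth colour $a$. The triangle is the key device: any two of its edges already form a multicoloured $P_3$, so only one further $P_3$ must be built among the four remaining vertices $y_1,\dots,y_4$. The paper then simply splits on whether all edges of $K^{\{y_1,\dots,y_4\}}$ have colour $a$; if not, a multicoloured $P_3$ there is immediate; if so, the assumption $d_c(x_i)<6$ forces two disjoint new-coloured edges in $E(\{x_1,x_2,x_3\},\{y_1,\dots,y_4\})$, which finish the construction. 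Your $3P_2$ seed gives no $P_3$ for free, so you must manufacture both paths from scratch; that is why your case analysis balloons while the paper's stays short. Your route might be completable, but the choice of $C_3\cup P_2$ as the seed structure, together with the preliminary $d_c(u)=6$ split, is exactly the missing idea that makes the argument clean.
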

\begin{proof}
Let $c$ be any edge-colouring of $K_7$ by at least $8$ colours. 
Assume first there is a vertex $u$ such that $d_c(u)=6$. There must be three other vertices $v_1,v_2,v_3$ such that $c(v_1v_2)\neq c(v_2v_3)$ and $c(v_2v_3)\notin C(u)$. Let $w_1,w_2,w_3$ be the remaining vertices. At most one of the edges $uw_1,uw_2,uw_3$, say $uw_2$,  is coloured by $c(v_1v_2)$, then $(v_1v_2v_3)\cup(w_1uw_3)$ is a multicoloured copy of $2P_3$. 

We now assume $d_c(u)<6$ for any vertex $u$. Since $AR(7,C_3\cup P_2)=8$, by Proposition \ref{propos:C_3P_2}, there is a multicoloured triangle $\Delta x_1x_2x_3$, and at least one edge disjoint to it whose colour $a$ is not in $\{c(x_1x_2),c(x_2x_3),c(x_3x_1)\}$. Let $y_1,y_2,y_3,y_4$ be the remaining vertices. If not all six edges $\{y_iy_j\}_{1\leq i<j\leq 4}$ are coloured by $a$, then in $K^{\{y_1,y_2,y_3,y_4\}}$ there are surely two adjacent edges $e_1$ and $e_2$ such that $c(e_1)=a\neq c(e_2)$. At most one of the edges of the  
triangle $\Delta x_1x_2x_3$ is coloured by $c(e_2)$ and then the other two edges of the triangle, together with $e_1$ and $e_2$ form  a multicoloured copy of $2P_3$. We therefore assume all six edges $\{y_iy_j\}_{1\leq i<j\leq 4}$ are coloured by $a$. There are at least four edges in $E(\{x_i\}_{1\leq i\leq 3},\{y_j\}_{1\leq j\leq 4})$ having distinct coloures not in 
$\{c(x_1x_2),c(x_2x_3),c(x_3x_1),a\}$, and since $d_c(x_i)<6$ for any $1\leq i\leq 3$, two of those edges must be disjoint. With no loss of generality assume these are $x_2y_2$ and $x_3y_3$, then $(x_1x_2y_2)\cup(x_3y_3y_4)$, for example,  is a multicoloured copy of $2P_3$. 
\end{proof}
\begin{propos}\label{propos:2P_3} $AR(n,2P_3 ) =\max\{ n+1,8\}$ for any $n\geq 6$.
\end{propos}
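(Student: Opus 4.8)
The plan is to prove the lower bound by explicit colourings and the upper bound by induction on $n$, with base cases $n=6$ and $n=7$. For the lower bound, when $n\ge 7$ we have $\max\{n+1,8\}=n+1$ and the colouring $c_{\text{star}}$ of $K_n$ (by $n$ colours) has no multicoloured $2P_3$: a copy of $2P_3$ avoiding the star centre uses only the single non-star colour, while a copy containing the centre has at most two edges at the centre, hence at least two edges of the non-star colour. When $n=6$ we need $AR(6,2P_3)\ge 8$, which $c_{\text{star}}$ does not supply; instead I would colour the six edges of a $K_4$ spanned by four of the vertices with six distinct colours and all nine remaining edges with a seventh colour. Since every copy of $2P_3$ in $K_6$ is spanning, it uses both vertices outside this $K_4$; those two vertices lie either in one common $P_3$ (forcing both edges of that $P_3$ to have the seventh colour) or in different $P_3$'s (forcing an edge of each to have it), so any such copy has at least two edges of the seventh colour and is not multicoloured.

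For the upper bound I would argue by induction on $n\ge 6$. The case $n=7$ is exactly Lemma \ref{lem:2P_3}. For $n=6$ one argues directly: given a colouring of $K_6$ by (exactly) $8$ colours, $AR(6,C_3\cup P_2)=7$ (Proposition \ref{propos:C_3P_2}) yields a multicoloured triangle $\Delta x_1x_2x_3$ with a disjoint edge $y_1y_2$ of a fourth colour; let $y_3$ be the sixth vertex. If the triangle on $\{y_1,y_2,y_3\}$ is not monochromatic, a two-coloured cherry of it together with a cherry of $\Delta x_1x_2x_3$ chosen to avoid the (at most two) colours it might share gives a multicoloured $2P_3$; if that triangle is monochromatic, then at least four colours appear among the nine edges between $\{x_1,x_2,x_3\}$ and $\{y_1,y_2,y_3\}$, and a short case analysis on how those colours are distributed produces a multicoloured $2P_3$.

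For the inductive step let $n\ge 8$ and let $c$ colour $K_n$ by at least $n+1$ colours. If some vertex has fewer than two $v$-colours, delete it: at most one colour is lost, leaving $K_{n-1}$ coloured by at least $n=\max\{(n-1)+1,8\}$ colours (using $n-1\ge 7$), and we finish by induction. So assume every vertex has at least two $v$-colours. Fix $u$ and $x_1,x_2$ with $c(ux_1),c(ux_2)$ distinct $u$-colours and put $W:=V-\{u,x_1,x_2\}$, $|W|\ge 5$. Since the colours of the cherry $x_1ux_2$ are $u$-colours, any multicoloured cherry with all vertices in $W$ completes it to a multicoloured $2P_3$; hence we may assume $K^W$ is monochromatic, of colour $\chi$. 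Then for each $y\in W$ its two $y$-colours occur among the edges $yu,yx_1,yx_2$, so there is a two-element set $P_y\subseteq\{u,x_1,x_2\}$ on whose two edges to $y$ the colours are distinct $y$-colours. If $P_y\ne P_{y'}$ for some $y,y'\in W$, pick $w\in P_{y'}\setminus P_y$ and $y''\in W-\{y,y'\}$: the cherry on $\{y\}\cup P_y$ (two distinct $y$-colours) and the cherry on $\{w,y',y''\}$ centred at $y'$ (colours $c(y'w)$, a $y'$-colour, and $\chi$) are vertex-disjoint, and their colour sets — consisting of private colours of distinct vertices together with $\chi$ (which differs from all of them) — are disjoint, giving a multicoloured $2P_3$. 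If instead all $P_y$ coincide, with common value $P$ and third element $z\in\{u,x_1,x_2\}\setminus P$, then for a fixed $y\in W$ one seeks a vertex $y'\in W-\{y\}$ with $c(y'z)\ne\chi$; such a $y'$ gives a multicoloured cherry $zy'y''$ disjoint from the cherry on $\{y\}\cup P$ and with disjoint colour set, hence a multicoloured $2P_3$. If no such $y'$ exists for two choices of $y$, then $K$ on $\{z\}\cup W$ is monochromatic; consequently each of the two vertices of $P$ has $n-2$ edges into this clique whose colours are distinct private colours of the other endpoints, and a cherry centred at one vertex of $P$ together with a disjoint cherry centred at the other again produces two vertex-disjoint multicoloured cherries with disjoint colour sets.

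The parts I expect to be most delicate are the base case $n=6$, which seems to require an honest if elementary case analysis, and the last subcase of the inductive step (all $P_y$ equal), where one has to track private colours carefully; the rest is bookkeeping of the kind already carried out in the proofs of Propositions \ref{prop:P_4P_2} and \ref{propos:P_5}.
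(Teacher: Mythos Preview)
Your overall architecture---explicit colourings for the lower bound, Lemma~\ref{lem:2P_3} for $n=7$, and induction for $n\ge 8$ after reducing to ``every vertex has two private colours''---matches the paper's. The differences are in the details.

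\medskip
\textbf{Base case $n=6$.} You start from a multicoloured $C_3\cup P_2$ (via Proposition~\ref{propos:C_3P_2}); the paper starts from a multicoloured $3P_2$ (via $AR(6,3P_2)=7$) and then argues that adding one representative edge of each remaining colour forces a $2P_3$. Both routes end in an omitted finite check, so neither is clearly superior. There is, however, a small slip in your non-monochromatic subcase: a two-coloured cherry in the $y$-triangle may have \emph{both} colours among $\{c(x_1x_2),c(x_2x_3),c(x_3x_1)\}$ (take the cherry $y_1y_3y_2$ when $c(y_1y_3),c(y_2y_3)$ are distinct triangle colours), and then no cherry of $\Delta x_1x_2x_3$ avoids both. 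The fix is immediate: always choose a $y$-cherry containing the edge $y_1y_2$, whose colour is the fourth colour; then at most one of its two colours is a triangle colour, and an $x$-cherry avoiding that one exists.

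\medskip
\textbf{Inductive step.} Your argument is correct, but it takes a longer route than necessary. You first reduce to $K^W$ monochromatic, then split on whether all the sets $P_y$ coincide, and finally handle a residual subcase where $\{z\}\cup W$ is monochromatic. The paper avoids all of this: once every $y\in W$ has its private colours confined to $\{yu,yx_1,yx_2\}$ (which follows directly, without assuming $K^W$ monochromatic, from the failure of the first case ``some $c(yz)$ with $y,z\in W$ is a $y$-colour''), each such $y$ has a private-colour edge to at least one of $x_1,x_2$. Pigeonhole then gives $y_1,y_2\in W$ with private-colour edges to the same $x_i$, say $x_1$; picking any $w\notin\{u,x_1,x_2,y_1,y_2\}$, the path $(ux_2w)$ together with $(y_1x_1y_2)$ is a multicoloured $2P_3$, since the four colours involved are private to four distinct vertices $u,y_1,y_2$ and (for $c(x_2w)$) not private to any of them. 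This replaces your three-subcase analysis with a single line.
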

\begin{proof} 
{\bf Lower bound:} To show that $AR(n,2P_3 )>n$, we use the colouring $c_{\text{star}}$ of $K_n$. To show that $AR(n,2P_3 )>7$, colour the edges between some four vertices by six distinct colours, and all other edges by one additional colour.

{\bf Upper bound:} To show that $AR(6,2P_3 )\leq 8$, consider any edge-colouring of $K_6$ by at least $8$ colours.
Since $AR(6,3P_2)=7$ (\cite{HY}, see \eqref{eq:perfect}), there is a multicoloured copy of $3P_2$. Form a graph by adding to those three edges a single edge of each of the remaining colours. It is easy, but a bit tedious, to check that this graph must contain a copy of $2P_3$ (which is obviously multicoloured).

For $n\geq 7$, we prove that $AR(n,2P_3 )\leq n+1$  by induction on $n$.
Lemma \ref{lem:2P_3} takes care of the base case $n=7$.
Let $n\geq 8$, assume that $AR(n-1,2P_3)=n$, and consider any edge-colouring $c$ of $K_n$ by at least $n+1$ colours.
If there exists a vertex $v$ having less than two $v$-colours, then removing $v$ along with its incident edges from the graph, at most one colour disappears from the graph, and the claim follows by the induction hypothesis. Therefore we assume that every vertex $v$ has at least two $v$-colours.

Let $u, x_1, x_2$ be vertices such that $c(ux_1), c(ux_2)$ are two distinct $u$-colours.
If there are vertices $y,z\notin\{u,x_1,x_2\}$ such that $c(yz)$ is a $y$-colour, let $w\notin\{u,x_1,x_2,y,z\}$ be some other vertex, then $(x_1ux_2)\cup(yzw)$ is a multicoloured copy of $2P_3$.
We therefore assume that for every vertex $y\notin\{u,x_1,x_2\}$, all the edges having a $y$-colour are among the edges $yu,yx_1,yx_2$.

Since there are at least $3$ vertices other than $u,x_1,x_2$, and each vertex $y\notin\{u,x_1,x_2\}$ has at least two $y$-colours, there must be, by the pigeonhole principle, at least two vertices $y_1,y_2\notin\{u,x_1,x_2\}$, and an $i\in{1,2}$ such that $c(y_1x_i)$ is a $y_1$-colour and $c(y_2x_i)$ is a $y_2$-colour. With no loss of generality assume that $i=1$, and let $w\notin\{u,x_1,x_2,y_1,y_2\}$ be another vertex, then $(ux_2w)\cup(y_1x_1y_2)$ is a multicoloured copy of $2P_3$. 
\qedhere\end{proof}

\end{document}